\begin{document}

\theoremstyle{plain} \newtheorem{theo}{Theorem}
\theoremstyle{plain} \newtheorem{prop}{Proposition}
\theoremstyle{plain}	\newtheorem{lemma}[theo]{Lemma}
\theoremstyle{plain}	\newtheorem{cor}{Corollary}
\theoremstyle{definition}	\newtheorem{defi}{Definition}
\theoremstyle{corollary}	\newtheorem{corollary}{Corollary}
\theoremstyle{remark} \newtheorem{example}{Example}
\newcommand{\ud}{\,\mathrm{d}}

\newcommand{\mathsym}[1]{{}}
\newcommand{\unicode}[1]{{}}
\title{Spectral gap for quantum graphs \\
and their connectivity}

\author{P.\,Kurasov, G.\,Malenov\'a, and S.\,Naboko
\thanks{PK was supported in part by Swedish Research Council Grant N50092501 SN would like to
thank Stockholm university for support and hospitalityˆ Support by the Institut Mittag-Le?er Djursholm, Sweden is gratefully acknowledged.}}
\maketitle

\begin{abstract}
The spectral gap 
for Laplace operators on metric graphs is investigated in relation to graph's connectivity,
in particular what happens if an edge is added to (or deleted from) a graph. It is shown that in contrast
to discrete graphs connection between the connectivity and the spectral gap is not one-to-one.
The size of the spectral gap depends not only on the topology of the metric graph but on its geometric
properties as well. It is shown that adding sufficiently large edges as well as cutting away sufficiently small
edges leads to a decrease of the
spectral gap.
Corresponding explicit criteria are given.
\end{abstract}

\section{Introduction}

Current paper is devoted to spectral properties of Laplace operators on metric graphs also known as {\it quantum graphs}
\cite{BeCaFuKu,ExSe,GePa,KoSch1,Ku04,Ku05}. Our studies were
inspired by classical results going back to M.\,Fiedler \cite{Fie} on the second eigenvalue of discrete graphs and by recent paper by P.\,Exner and M.\,Jex
on the ground state for quantum graphs with delta-coupling \cite{ExJe}. M.\,Fiedler proposed to call the second lowest (the first excited) eigenvalue
of the discrete Laplacian by {\it algebraic connectivity} of the corresponding discrete graph. Such name is explained by close relations between the
algebraic connectivity and standard vertex and edge connectivities. P.\,Exner and M.\,Jex investigated behavior of the ground state for
(continuous) Laplacians on metric graphs as one of the edges is shortened or extended. It appeared that the bound state may increase 
as the length of an edge is increasing, also the opposite behavior may be expected. 

Our goal is to study behavior of the first excited eigenvalue when either edges are delete or added to a metric graph. 
Bearing in mind that the ground state for standard Laplacians is zero, the first excited eigenvalue gives us
the spectral gap (of course, provided the graph is connected).
Spectral properties of
quantum graphs, especially with equilateral lengths of edges, are closely related to spectral properties of corresponding discrete Laplacians. Therefore
one might expect that qualitative behavior of eigenvalues for discrete and continuous Laplacians just coincide. It appears that the spectral gap
for discrete and continuous Laplacians may behave differently as edges are added or deleted without
changing the vertex set. This is connected with the fact that adding an edge
to a discrete graph does not change the phase space, while adding an edge to a metric graph enlarges the corresponding phase space.

Adding or deleting an edge without changing the vertex set changes graph's Euler characteristics. It has been proven that the Euler
characteristics is determined by spectral asymptotics and therefore cannot be recovered from the first few eigenvalues alone \cite{FuKuWi,Ku06,Ku08},
unless the metric graph has a basic length.

We would like to investigate the spectral gap in relation to graph's connectivity and geometry.  It has already been proven in \cite{Fr2005,KuNa} that the graph
formed by just one edge (or a chain of edges) has the lowest spectral gap among all quantum graphs having the same total length. Therefore
it is natural to expect that the spectral gap increases with connectivity. Adding an edge to a graph increases its connectivity, but the total length $ \mathcal L $
increases as well. Increase of the total length may lead to a decrease of the spectral gap, since in accordance to Weyl's law the eigenvalues satisfy
the asymptotics $ \lambda_n \sim \left(
\frac{\pi}{\mathcal L} \right)^2 n^2. $ Similarly, deleting an edge may lead to both decrease and increase of the spectral gap. We study these phenomena in more details
starting from addition of edges.

The paper is organized as follows. We prove first few elementary classical facts about spectral gap for discrete Laplacians. We continue then
with quantum graphs and study behavior of the spectral gap as two vertices are glued into one or as an edge is added between two already existing
vertices. After that we change our point of view and study the case where an edge is cut at a certain internal point or where a whole edge is deleted.
It appears that the spectral gap may grow even if a whole interval is cut away from the metric graph. Explicit estimates
for the length of the edge that can be cut away are obtained.

\section{Discrete graphs (warming up)}

Let  $ G $ be a discrete graph with $ M $ vertices and $ N $ edges connecting some of the vertices.
Then the corresponding Laplace operator $ L (G) $ is defined on
the finite dimensional space $ \ell_2 (G) =  \mathbb C^M $ by the following formula \cite{CvDoSa,CvRoSi,Mo}
\begin{equation} \label{ldisc}
\left(L(G) \psi \right) (m) = \sum_{n \sim m} \left( \psi(m) - \psi (n) \right),
\end{equation}
where the sum is taken over all neighboring vertices. The Laplace operator can also be defined using the
connectivity matrix $ C = \{ c_{nm} \}$
$$ c_{nm} = \left\{
\begin{array}{ll}
1, & \mbox{the vertices $n $ and $ m $ are neighbors}, \\
& \mbox{{\it i.e.} connected by an edge}; \\[2mm]
0, & \mbox{otherwise},
\end{array} \right. $$
and the valence matrix $ V = {\rm diag}\, \{ v_1, v_2, \dots, v_M \}, $ where $ v_m $ are the valencies (degrees) 
of the corresponding vertices
$$ L (G) = V - C, $$
which corresponds to the matrix realisation of the operator $ L(G) $ in the canonical basis given by
the vertices.
In the literature one may find another definitions for discrete Laplacians. In \cite{Chung} one uses the congruent matrix
\begin{equation}
\mathbb L (G) = V^{-1/2} L(G) V^{-1/2}  = I -  V^{-1/2} C V^{-1/2}.
\end{equation}
Such definition of the Laplacian matrix is consistent with the eigenvalues analysis in spectral geometry. Another one Laplacian
matrix connected with the averaging operator is similar to the previous one
\begin{equation}
\mathbf L (G) = V^{-1} L(G) = V^{-1/2} \mathbb L (G) V^{1/2}
\end{equation}
is important for studies of quantum graphs, since its eigenvalues are closely related to the spectrum of
corresponding equilateral graphs \cite{Cattaneo}.

In the current section we
discuss briefly spectral properties of the standard Laplace matrix $ L(G) $ given by \eqref{ldisc},  first of all in relation to
the set of edges.

Since the Laplace operator is uniquely defined by the discrete graph $ G $ its eigenvalues $ \lambda_0 \leq \lambda_1 \leq \dots \leq \lambda_{M-1} $ are usually referred to
as the eigenvalues of $ G. $
The ground state corresponding to $\lambda_0 = 0 $ has eigenfunction  $\psi_0 = 1$, where $ 1 \in \mathbb C^{M} $ denotes the vector built up of ones on $ G $.
The multiplicity of the ground state coincides with the number of connected components in $ G $. In order
to avoid artificial complications only connected graphs will be considered in the sequel.
Then the {\bf spectral gap} $ \lambda_1 - \lambda_0 $ for the discrete Laplacian coincides with 
the first excited eigenvalue $ \lambda_1 . $

 The spectral gap of a discrete graph is a monotonously increasing function of the set of edges. In other words,
 adding an edge always causes increase of the second eigenvalue or keeps it unchanged, provided we have the same set of vertices.
\begin{prop}\label{prop1}
Let $ G $ be a connected discrete graph and
 let $G'$ be a discrete graph obtained from $G$ by adding one edge between the vertices $m_1$ and $m_2$. Let $L$ denote the
  discrete Laplacian defined by \eqref{ldisc}. Then the following holds:
    \begin{enumerate}
        \item The first excited eigenvalues satisfy the inequality:
            \[\lambda_1(G)\leq \lambda_1(G').\]
        \item The equality $\lambda_1(G) = \lambda_1(G')$ holds if and only if the second eigenfunction $\psi_1^G$ on the  graph $G$ may be chosen attaining equal values at the vertices $m_1$ and $m_2$
                \[\psi_1^G(m_1) = \psi_1^G(m_2).\]
    \end{enumerate}
\end{prop}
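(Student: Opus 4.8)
The plan is to use the variational (Rayleigh quotient) characterisation of $\lambda_1$. Since both $G$ and $G'$ have the same vertex set and the same ground state $\psi_0 = 1$, we have
\[
\lambda_1(G) = \min_{\substack{\psi \perp 1}} \frac{\langle L(G)\psi, \psi\rangle}{\langle \psi, \psi\rangle},
\qquad
\lambda_1(G') = \min_{\substack{\psi \perp 1}} \frac{\langle L(G')\psi, \psi\rangle}{\langle \psi, \psi\rangle},
\]
over the same orthogonal complement of $1$ in $\mathbb C^M$. The key observation is that the quadratic form of the discrete Laplacian is the edge sum
\[
\langle L(G)\psi, \psi\rangle = \sum_{(n,m)\in E(G)} |\psi(n) - \psi(m)|^2,
\]
so that adding one edge between $m_1$ and $m_2$ gives
\[
\langle L(G')\psi, \psi\rangle = \langle L(G)\psi, \psi\rangle + |\psi(m_1) - \psi(m_2)|^2 \;\geq\; \langle L(G)\psi, \psi\rangle
\]
for every $\psi$. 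Dividing by $\langle\psi,\psi\rangle$ and taking the minimum over $\psi\perp 1$ yields statement (1) immediately.

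For statement (2), the ``if'' direction is easy: if a minimiser $\psi_1^G$ for $G$ can be chosen with $\psi_1^G(m_1) = \psi_1^G(m_2)$, then the extra term vanishes on it, so the same vector is an admissible trial function for $G'$ with Rayleigh quotient equal to $\lambda_1(G)$; combined with (1) this forces $\lambda_1(G') = \lambda_1(G)$. For the ``only if'' direction, suppose $\lambda_1(G') = \lambda_1(G) =: \lambda_1$. Let $\psi$ be a minimiser for $\lambda_1(G')$ (so $\psi \perp 1$, $\psi\neq 0$). Then
\[
\lambda_1 \langle\psi,\psi\rangle = \langle L(G')\psi,\psi\rangle = \langle L(G)\psi,\psi\rangle + |\psi(m_1)-\psi(m_2)|^2 \geq \lambda_1\langle\psi,\psi\rangle + |\psi(m_1)-\psi(m_2)|^2,
\]
where the last inequality uses the variational bound $\langle L(G)\psi,\psi\rangle \geq \lambda_1\langle\psi,\psi\rangle$ valid since $\psi\perp 1$. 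Hence $\psi(m_1) = \psi(m_2)$ and simultaneously $\langle L(G)\psi,\psi\rangle = \lambda_1\langle\psi,\psi\rangle$, i.e. $\psi$ is also a minimiser of the Rayleigh quotient for $G$, so it is an eigenfunction of $L(G)$ with eigenvalue $\lambda_1$ attaining equal values at $m_1$ and $m_2$.

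The only genuinely delicate point is the equality-case reasoning: one must be careful that ``the second eigenfunction may be chosen'' refers to the possibility that $\lambda_1(G)$ is degenerate, and that the $\psi$ produced above indeed lies in the $\lambda_1$-eigenspace of $L(G)$ — this is exactly what the sandwich of inequalities delivers, since equality in $\langle L(G)\psi,\psi\rangle \geq \lambda_1\langle\psi,\psi\rangle$ with $\psi\perp 1$ forces $\psi$ into the eigenspace (the Laplacian being self-adjoint, the Rayleigh quotient restricted to $1^\perp$ attains its minimum exactly on eigenvectors for the smallest eigenvalue there). I would spell this last implication out explicitly, perhaps by expanding $\psi$ in an orthonormal eigenbasis of $L(G)$, to make the argument self-contained.
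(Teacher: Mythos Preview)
Your proof is correct and follows essentially the same route as the paper. The only cosmetic difference is that the paper writes the difference $L(G')-L(G)$ as an explicit rank-one matrix with four nonzero entries and notes its eigenvalues are $0$ and $2$, whereas you use the edge-sum form of the quadratic form to obtain $\langle L(G')\psi,\psi\rangle-\langle L(G)\psi,\psi\rangle=|\psi(m_1)-\psi(m_2)|^2$ directly; these are two ways of saying the same thing, and the Rayleigh-quotient argument for both parts is identical in spirit (your treatment of the equality case is in fact spelled out more carefully than the paper's).
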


\begin{proof}
    The first statement follows from the fact that
        \begin{equation}
            L(G') - L(G) = \left(
                \begin{array}{ccccc}
                     & \vdots & & \vdots &  \\
                     \ldots & 1 & \ldots & -1 & \ldots\\
                     & \vdots & & \vdots & \\
                     \ldots & -1 & \ldots & 1 & \ldots\\
                     & \vdots & & \vdots &
                \end{array}
              \right)
        \end{equation}
is a matrix with just four non-zero entries. It is easy to see that the matrix is positive semi-definite, since the eigenvalues are $0$ (with the multiplicity $M-1$) and $2$ (simple eigenvalue) and therefore $L(G')-L(G)\geq 0$ which implies the first statement.

To prove the last assertion let us recall that $\lambda_1(G')$ can be calculated using Rayleigh quotient
        \[
        \lambda_1(G') = \min_{\psi \bot 1} \frac{\langle\psi, L(G')\psi\rangle}{\langle \psi,\psi\rangle}\geq \min_{\psi \bot 1} \frac{\langle \psi, L(G) \psi \rangle}{\langle \psi, \psi \rangle}
        = \lambda_1 (G).
        \]
        Hence the trial function $ \psi $ should be chosen orthogonal to the ground state, {\it i.e.} having mean value zero.
We have equality in the last formula if and only if $\psi$ minimizing the first and the second quotients can be chosen such that
 $(L(G')- L(G))\psi = 0,$ i.\,e. $\psi(m_1) = \psi(m_2)$.

\end{proof}

Next we are interested in what happens if we add a pending edge, i.\,e. an edge connected to the graph at one already existing node.

\begin{prop}\label{prop2}
Let $G$ be a connected discrete graph and let $G'$ be another graph obtained from $G$ by adding one vertex and one edge between the new vertex and the vertex $m_1$. Then the following holds:

\begin{enumerate}
        \item{The first excited eigenvalues satisfy the following inequality:
            \[\lambda_1(G)\geq \lambda_1(G').\]}
        \item{The equality $\lambda_1(G) = \lambda_1(G')$ holds if and only if every eigenfunction $\psi_1^G$
        corresponding to $ \lambda_1 (G) $ is equal to zero at $m_1$
            \[\psi_1^G(m_1) = 0.\]}
\end{enumerate}
\end{prop}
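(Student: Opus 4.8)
The plan is to mimic the structure of the proof of Proposition~\ref{prop1}, but now the operators $L(G)$ and $L(G')$ act on spaces of different dimensions, so the comparison must be set up more carefully. Write $G'$ as having vertex set $\{1,\dots,M,M+1\}$, where $M+1$ is the new pendant vertex attached to $m_1$.

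\begin{proof}[Proof plan]
For the inequality $\lambda_1(G')\le\lambda_1(G)$, I would use the Rayleigh quotient characterisation on $G'$ with a cleverly chosen trial function. Let $\psi_1^G$ be a second eigenfunction on $G$, normalised with mean value zero, so that $\langle\psi_1^G,L(G)\psi_1^G\rangle=\lambda_1(G)\|\psi_1^G\|^2$. Extend it to a function $\widetilde\psi$ on $G'$ by setting $\widetilde\psi(M+1):=\psi_1^G(m_1)$, i.e.\ by giving the new pendant vertex the same value as its unique neighbour. Then the extra terms contributed to the quadratic form by the new edge $(m_1,M+1)$ vanish, so $\langle\widetilde\psi,L(G')\widetilde\psi\rangle=\langle\psi_1^G,L(G)\psi_1^G\rangle$, while $\|\widetilde\psi\|^2=\|\psi_1^G\|^2+|\psi_1^G(m_1)|^2\ge\|\psi_1^G\|^2$. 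The one subtlety is that the Rayleigh quotient on $G'$ must be minimised over functions orthogonal to the constant vector $1\in\mathbb C^{M+1}$, whereas $\widetilde\psi$ need not have mean value zero on $G'$ (its mean is $\psi_1^G(m_1)/(M+1)$). This is fixed by subtracting the appropriate constant: replace $\widetilde\psi$ by $\widehat\psi:=\widetilde\psi-c1$ with $c$ chosen so that $\widehat\psi\perp 1$. Subtracting a constant does not change the quadratic form $\langle\cdot,L(G')\cdot\rangle$ (since $L(G')1=0$), and it can only decrease the norm, $\|\widehat\psi\|^2\le\|\widetilde\psi\|^2$. Hence
\[
\lambda_1(G')\le\frac{\langle\widehat\psi,L(G')\widehat\psi\rangle}{\|\widehat\psi\|^2}
=\frac{\langle\psi_1^G,L(G)\psi_1^G\rangle}{\|\widehat\psi\|^2}
\le\frac{\lambda_1(G)\|\psi_1^G\|^2}{\|\psi_1^G\|^2}=\lambda_1(G),
\]
using $\|\widehat\psi\|^2\le\|\widetilde\psi\|^2=\|\psi_1^G\|^2+|\psi_1^G(m_1)|^2$ together with... here one must be a little careful, since I have bounded $\|\widehat\psi\|$ both from above and below; the clean way is to note $\|\psi_1^G\|^2\le\|\widehat\psi\|^2$ fails in general, so instead I would argue directly that the min over $\widehat\psi\perp 1$ of the Rayleigh quotient is $\le$ the value at the particular $\widehat\psi$, and separately show that value is $\le\lambda_1(G)$ by comparing with the Rayleigh quotient of $\psi_1^G$ on $G$ after re-examining the constant shift. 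In fact the cleanest route avoids the shift entirely: parametrise trial functions on $G'$ of the form $(\phi,t)$ with $\phi\in\mathbb C^M$, $t\in\mathbb C$, impose $\sum\phi(m)+t=0$, and optimise; taking $\phi=\psi_1^G+s1$ and $t=-Ms-\sum\psi_1^G$ gives a one-parameter family whose Rayleigh quotient one computes explicitly and minimises over $s$. Choosing $s$ to set $t=\psi_1^G(m_1)+s$ recovers the pendant-matching idea and yields the bound.

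For the equality case, I would trace through when the above inequality is sharp. Equality $\lambda_1(G')=\lambda_1(G)$ forces the optimal trial function $\widehat\psi$ on $G'$ to be a genuine eigenfunction of $L(G')$ at $\lambda_1(G')$, and also to make all the intermediate inequalities tight; in particular the contribution of the new edge must vanish, forcing $\widehat\psi(M+1)=\widehat\psi(m_1)$, and then the restriction of $\widehat\psi$ to the first $M$ coordinates must be an eigenfunction of $L(G)$ at $\lambda_1(G)$. Reading off the eigenvalue equation of $L(G')$ at the pendant vertex $M+1$ gives $\lambda_1(G')\,\widehat\psi(M+1)=\widehat\psi(M+1)-\widehat\psi(m_1)=0$, so since $\lambda_1(G')=\lambda_1(G)>0$ (the graph $G$ being connected and having $\ge 2$ vertices, unless $G$ is a single vertex which is a trivial special case) we get $\widehat\psi(M+1)=0$ and hence $\widehat\psi(m_1)=0$, i.e.\ the corresponding second eigenfunction of $G$ vanishes at $m_1$. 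Conversely, if some second eigenfunction $\psi_1^G$ of $G$ vanishes at $m_1$, then extending it by $0$ at the pendant vertex produces a function on $G'$ that is orthogonal to $1$ iff $\psi_1^G$ was already mean-zero on $G$ — which it can be taken to be — and one checks it is an eigenfunction of $L(G')$ with eigenvalue $\lambda_1(G)$; combined with part~1 this gives $\lambda_1(G')=\lambda_1(G)$. I would also remark why it is "every" eigenfunction and not merely "some": if one eigenfunction vanishing at $m_1$ exists then equality holds, and once equality holds the argument above shows every eigenfunction at $\lambda_1(G')$ restricted to $G$ vanishes at $m_1$, and these are exactly the eigenfunctions at $\lambda_1(G)$ that extend — a short linear-algebra argument matching eigenspaces finishes this.

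The main obstacle I anticipate is the bookkeeping around the orthogonality-to-$1$ constraint on the larger graph: unlike in Proposition~\ref{prop1}, where both graphs share the vertex set and hence the same ground state, here the constant vectors on $G$ and $G'$ live in different spaces, so one cannot simply restrict trial functions. Getting the inequality direction right — and making sure the norm comparison $\|\widehat\psi\|^2$ versus $\|\psi_1^G\|^2$ works in one's favour after the constant shift — is the delicate point; the pendant-vertex eigenvalue equation $\lambda\psi(M+1)=\psi(M+1)-\psi(m_1)$ is the key identity that makes the equality analysis clean, and positivity of $\lambda_1(G)$ for connected $G$ with at least two vertices is what lets us conclude $\psi(m_1)=0$.
\end{proof}
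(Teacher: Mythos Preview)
Your approach is exactly the paper's approach, but you talk yourself out of it at the crucial step. The inequality $\|\widehat\psi\|^2 \ge \|\psi_1^G\|^2$ does \emph{not} fail in general; it always holds, and this is precisely what makes the argument work. Expand directly:
\[
\|\widehat\psi\|^2_{\ell_2(G')} = \sum_{n=1}^{M}|\psi_1^G(n)-c|^2 + |\psi_1^G(m_1)-c|^2
= \|\psi_1^G\|^2_{\ell_2(G)} - 2c\,\langle\psi_1^G,1\rangle_{\ell_2(G)} + Mc^2 + |\psi_1^G(m_1)-c|^2.
\]
The cross term vanishes because $\psi_1^G$ is already orthogonal to $1$ on $G$, leaving
\[
\|\widehat\psi\|^2 = \|\psi_1^G\|^2 + Mc^2 + |\psi_1^G(m_1)-c|^2 \ge \|\psi_1^G\|^2,
\]
which is exactly the denominator bound you need. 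The paper does precisely this computation. Your detour through a one-parameter family of trial functions is unnecessary; the ``main obstacle'' you anticipate is not there, because the constant shift restricted to the old vertex set does not spoil the norm comparison --- the mean-zero condition on $G$ absorbs it.

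For the equality case, the paper reads equality off directly from this norm identity: $\lambda_1(G')=\lambda_1(G)$ forces $Mc^2 + |\psi_1^G(m_1)-c|^2 = 0$, hence $c=0$ and $\psi_1^G(m_1)=0$, and this must hold for \emph{every} choice of $\psi_1^G$ (else the inequality is strict for that choice). Your route via the pendant-vertex eigenvalue equation $\lambda\,\widehat\psi(M+1)=\widehat\psi(M+1)-\widehat\psi(m_1)$ is a correct alternative, but it requires first knowing $\widehat\psi$ is an eigenfunction of $L(G')$, which in turn needs the first Rayleigh inequality to be tight; the paper's direct norm comparison bypasses this. Either way works, but your version adds steps rather than removing them.
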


\begin{proof}
    Let us define the following vector on $G'$:
        \[\varphi(n):=\left\{
            \begin{array}{ll}
                \psi_1^G(n), & \text{on } G,\\
                \psi_1^G(m_1)& \text{on } G'\backslash G.
            \end{array}\right.\]
    This vector is not orthogonal to the zero energy eigenfunction $ 1 \in \mathbb C^{M+1} $, where we keep the same notation $1$ 
    for the vector build up of ones now on $G'$. Therefore consider the nonzero vector $\gamma$ shifted by a constant $c$
        \[\gamma(n):=\varphi(n)+c.\]
   Here $c$ is chosen so that the orthogonality condition in  $l_2(G') = \mathbb C^{M+1} $ holds
        \[0 = \langle \gamma, 1\rangle_{l_2(G')} =  \underbrace{ \langle \psi_1^G, 1 \rangle_{l_2(G)} }_{=0} +\psi_1^G(m_1) + c M',\]
    where $M' = M+1$ is the number of vertices in $G'$. This implies
        \[c = -\frac{\psi_1^G(m_1)}{M'}.\]
    Using this vector the following estimate on the first eigenvalue may be obtained:
        \begin{eqnarray} \label{enq1}
            \lambda_1(G')\leq \frac{\langle L (G') \gamma,\gamma\rangle_{l_2(G')}}{\|\gamma\|^2_{l_2(G')}} = \frac{\langle L (G) \psi_1^G, \psi_1^G\rangle_{l_2(G)}}{\|\psi_1^G\|_{l_2(G)}^2+c^2 M+|\psi_1^G(m_1)+c|^2}\leq \lambda_1(G).
        \end{eqnarray}
    The last inequality follows from the fact that
        \[\langle L (G) \psi_1^G, \psi_1^G\rangle_{l_2(G)} = \lambda_1(G) \|\psi_1^G\|^2,\]
    and
        \[\|\psi_1^G\|_{l_2(G)}^2+c^2 M+|\psi_1^G(m_1)+c|^2 \geq \|\psi_1^G\|_{l_2(G)}^2.\]

    Note, that we have equality if and only if $c=0$ and $|\psi_1^G(m_1) + c|^2=0$ which implies $\psi_1^G(m_1)=0.$
    If there exists a $ \psi_1^G $, such that $ \psi_1^G (m_1) \neq 0 $, then the inequality in \eqref{enq1} is strict and we get
    $$ \lambda_1 (G) > \lambda_1 (G'). $$
\end{proof}

We see that the first excited eigenvalue has a tendency to decrease if a pending edge is attached to a graph. It is clear from the proof
that gluing instead of one edge any connected graph would lead to the same result, provided there is just one contact vertex. If the number
of contact vertices is larger, then the spectral gap may increase as shown in Proposition \ref{prop1}.

\section{Quantum graphs: definitions}

Let $ \Gamma $ be a metric graph formed by $ N = N(\Gamma) $ compact edges $ E_n = [x_{2n-1}, x_{2n} ]$, $ n= 1,2, \dots, N $
(identified with intervals on $ \mathbb R $) joined together at $ M = M(\Gamma) $ vertices (nodes). The free Laplace
operator $ L^{\rm st} (\Gamma) = - \frac{d^2}{dx^2} $ is defined in the Hilbert space $ L_2 (\Gamma) = \cup_{n=1}^N L_2 (E_n) $
on functions $ u \in \cup_{n=1}^N W_2^2 (E_n) $ satisfying standard matching conditions at the vertices
$ V_m, m= 1,2, \dots, M $
\begin{equation}
\left\{
\begin{array}{l}
\mbox{$u $ is continuous at $ V_m $},\\
\mbox{the sum of normal derivatives is zero}.
\end{array} \right.
\end{equation}
The free Laplacian is self-adjoint in $ L_2 (\Gamma) $ and is uniquely determined by the metric graph $ \Gamma.$
The quadratic form of $ L^{\rm st} $ is defined on the domain $ \stackrel{c}{W^1_2}(\Gamma) $ consisting of all functions from
$ \cup_{n=1}^N  W_2^1 (E_n) $ which are also continuous at the vertices.\footnote{Any function from the Sobolev space $ W_2^1 (\Gamma) $
        is continuous inside each edge, but such functions are not necessarily continuous at the vertices.}
Since the domain of the operator is invariant under complex conjugation, the corresponding eigenfunction may be
chosen real. Therefore in order to simply our presentation we assume that the eigenfunctions are real.

The spectrum of the Laplacian is
discrete $ \lambda_0 = 0 \leq \lambda_1 \leq \lambda_2 \leq \ldots $ and will be referred to as the spectrum of $ \Gamma. $ If $ \Gamma $ is connected,
then the ground state $ \lambda_0 = 0 $ has multiplicity one and the corresponding eigenfunction is $ \psi_0^\Gamma = 1. $
Since only connected graphs will be considered, the spectral gap $ \lambda_1 (\Gamma) - \lambda_0 (\Gamma) $ coincides
with the energy of the first excited state $ \lambda_1. $

\section{Increasing connectivity - gluing vertices together}

As was already mentioned, the spectral gap was extensively investigated for discrete graphs. Our goal here is to study
the spectral gap for Laplacians on metric graphs especially in relation to connectivity of the underlying metric graphs.
More precisely, our aim is to prove analogs of Proposition \ref{prop1} and \ref{prop2} for quantum graphs. Our original idea was to study behavior 
of the spectral gap when a new edge is added to the original metric graph. But this procedure increases the total length of the
graph and therefore it is not surprising that the spectral gap has tendency to decrease in contrast to Proposition \ref{prop1}
(see Theorem \ref{th:add} below). Therefore let us start our studies by presenting a direct analog of Proposition \ref{prop1}
for quantum graphs. The corresponding theorem answers the following question: what happens to the spectral gap
if two vertices in a metric graph are joined into one common vertex. This procedure does not change the set of edges and therefore the total length of
the graph is also preserved, but increases graph's connectivity instead.

\begin{theo} \label{th:join}
Let $ \Gamma $ be a connected metric graph and let $ \Gamma' $ be another metric graph obtained from $ \Gamma $ by joining
together two of its vertices, say $ V_1 $ and $ V_2. $ Then the following holds:
\begin{enumerate}
\item The spectral gap satisfies the inequality
\begin{equation} \label{ens}
\lambda_1 (\Gamma) \leq \lambda_1 (\Gamma').
\end{equation}
\item The equality $ \lambda_1 (\Gamma) = \lambda_1 (\Gamma') $ holds if and only if the eigenfunction $ \psi_1 $ corresponding to
the first excited state can be chosen such that
\begin{equation} \label{eqva} \psi_1 (V_1) = \psi_1 (V_2). 
\end{equation}
\end{enumerate}
\end{theo}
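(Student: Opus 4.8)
The plan is to mirror the discrete argument of Proposition~\ref{prop1} using the variational (Rayleigh quotient) characterization of $\lambda_1$, but now paying attention to the fact that the relevant form domains are genuinely different for $\Gamma$ and $\Gamma'$. First I would observe that the operators $L^{\rm st}(\Gamma)$ and $L^{\rm st}(\Gamma')$ act on the \emph{same} Hilbert space $L_2(\Gamma)=L_2(\Gamma')$ (the edge set and the metric structure are unchanged), and that the quadratic form is in both cases $\int_\Gamma |u'|^2\,dx$ — the only difference is the form domain: $\stackrel{c}{W^1_2}(\Gamma')$ imposes the extra continuity requirement $u(V_1)=u(V_2)$, so $\stackrel{c}{W^1_2}(\Gamma')\subset\stackrel{c}{W^1_2}(\Gamma)$. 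Both form domains still contain the constants, and for connected graphs $\psi_0=1$ is the unique ground state at $\lambda_0=0$ in each case.

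For the inequality \eqref{ens}, I would write
\[
\lambda_1(\Gamma')=\min_{\substack{u\in\stackrel{c}{W^1_2}(\Gamma')\\ u\perp 1,\ u\neq 0}}\frac{\int_\Gamma|u'|^2\,dx}{\int_\Gamma|u|^2\,dx}
\geq
\min_{\substack{u\in\stackrel{c}{W^1_2}(\Gamma)\\ u\perp 1,\ u\neq 0}}\frac{\int_\Gamma|u'|^2\,dx}{\int_\Gamma|u|^2\,dx}
=\lambda_1(\Gamma),
\]
the inequality being immediate since we are minimizing the same quotient over a smaller set. This gives statement~1.

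For statement~2, the ``if'' direction is easy: if some $\psi_1$ realizing $\lambda_1(\Gamma)$ can be chosen with $\psi_1(V_1)=\psi_1(V_2)$, then $\psi_1\in\stackrel{c}{W^1_2}(\Gamma')$, it is still orthogonal to $1$, and it is admissible in the minimization defining $\lambda_1(\Gamma')$, forcing $\lambda_1(\Gamma')\le\lambda_1(\Gamma)$; combined with statement~1 this yields equality. For the ``only if'' direction, suppose $\lambda_1(\Gamma)=\lambda_1(\Gamma')=:\lambda_1$. Let $\psi$ be a minimizer of the $\Gamma'$-quotient; then $\psi\in\stackrel{c}{W^1_2}(\Gamma')\subset\stackrel{c}{W^1_2}(\Gamma)$, it is orthogonal to $1$, and it achieves the value $\lambda_1$ in the $\Gamma$-quotient as well, hence it is also a minimizer there — so $\psi$ is an eigenfunction of $L^{\rm st}(\Gamma)$ for $\lambda_1$ satisfying automatically $\psi(V_1)=\psi(V_2)$. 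The subtle point — and the part I expect to be the main obstacle — is that being a minimizer of the $\Gamma'$-form does not \emph{a priori} make $\psi$ an eigenfunction of the operator $L^{\rm st}(\Gamma')$: one must check that a function continuous at the merged vertex which extremizes the Dirichlet quotient actually satisfies the full set of standard matching conditions at $V_1\cup V_2$ (continuity plus vanishing sum of normal derivatives \emph{over all edges meeting the new vertex}). This requires the standard argument that the form minimizer solves the Euler--Lagrange equation $-\psi''=\lambda_1\psi$ on each edge together with the natural (Kirchhoff) boundary condition at the glued vertex; equivalently, one invokes that the Friedrichs-type extension associated with the form $\int|u'|^2$ on $\stackrel{c}{W^1_2}(\Gamma')$ is precisely $L^{\rm st}(\Gamma')$. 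Once this identification is in place, $\psi=\psi_1$ is the desired eigenfunction on $\Gamma$ with equal vertex values, completing the proof. I would also remark that, just as in the discrete case, the argument is unchanged if more than two vertices are glued together simultaneously.
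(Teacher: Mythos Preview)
Your proof is correct and follows the same route as the paper's: both exploit the inclusion of form domains $\stackrel{c}{W^1_2}(\Gamma')\subset\stackrel{c}{W^1_2}(\Gamma)$ to obtain \eqref{ens} via the Rayleigh quotient, and both handle the equality case by transporting the $\Gamma'$-minimizer back to $\Gamma$ (and, for the ``if'' direction, the $\Gamma$-eigenfunction forward to $\Gamma'$). The only difference is cosmetic: you explicitly flag the form--operator correspondence (that a Rayleigh minimizer on $\Gamma'$ is an $L^{\rm st}(\Gamma')$-eigenfunction satisfying the Kirchhoff condition at the glued vertex) as a point requiring justification, whereas the paper simply takes this standard identification for granted.
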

\begin{proof}
The first excited state can be calculated by minimizing the Rayleigh quotient $ \displaystyle \frac{ \| u' \|^2 }{ \|  u \|^2} $ corresponding
to the standard Laplacian
over the set of functions from the domain of the quadratic form which are in addition
orthogonal to the ground state eigenfunction $ \psi_0 = 1. $ For the original graph $\Gamma $ the domain of the quadratic form
consists of all $ W_2^1 (\Gamma) $ functions which are continuous at all vertices of $ \Gamma. $ The corresponding set for $ \Gamma' $
is characterized by one additional condition $ u (V_1) = u(V_2) $ - continuity of the function at the new vertex $ V_1 \cup V_2. $
Inequality \eqref{ens} for the corresponding minima follows.

To prove the second statement we first note that if the minimizing function $ \psi_1 $ for $ \Gamma $ satisfies in addition \eqref{eqva}, then
the same function is a minimizer for $ \Gamma'$ and the corresponding eigenvalues coincide. 
It is clear since the domain of the quadratic form keeps only the continuity
of functions at the vertices. Conversely
if $  \lambda_1 (\Gamma) = \lambda_1 (\Gamma') $,
then the eigenfunction for $ L^{\rm st}( \Gamma') $ is also a minimizer for the Rayleigh quotient for $ \Gamma $ and therefore is an eigenfunction
for $ L^{\rm st} (\Gamma)$ satisfying in addition \eqref{eqva}. 

\end{proof}

Proposition \ref{prop1} and Theorem \ref{th:join} appear to be rather similar at first glance. But the reasons for the spectral 
gap to increase
are different. In the case of discrete graphs the difference between the Laplace operators is a nonnegative matrix. For quantum graphs
the differential operators are identical, but inequality \eqref{ens} is valid due to the fact that the opposite
inequality holds for the domains of the quadratic forms.

\begin{corollary} Theorem \ref{th:join} implies that the flower graph consisting of $ N $ loops attached to one vertex has the largest
spectral gap among all graphs formed by a given set of edges.
\begin{figure}

\centerline{ \includegraphics[width=0.2\textwidth]{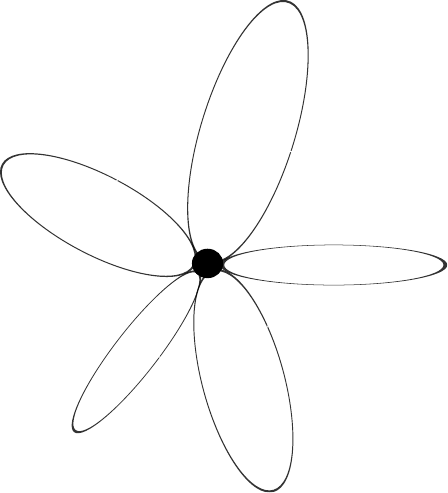} }
\caption{Flower graph.}

 \label{Fig2}

\end{figure}
\end{corollary}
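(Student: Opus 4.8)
The plan is to apply Theorem~\ref{th:join} iteratively, starting from an arbitrary connected metric graph $\Gamma$ built on a fixed set of $N$ edges $E_1,\dots,E_N$. Each edge $E_n=[x_{2n-1},x_{2n}]$ has two endpoints; in $\Gamma$ these $2N$ endpoints are distributed among the $M(\Gamma)$ vertices. The flower graph $\Gamma_{\mathrm{fl}}$ on the same edge set is the graph in which all $2N$ endpoints are identified into a single vertex. The key observation is that $\Gamma_{\mathrm{fl}}$ can be reached from $\Gamma$ by a finite sequence of vertex-gluings: as long as $\Gamma$ has more than one vertex, pick two distinct vertices and glue them together, producing a new metric graph on the same edge set with one fewer vertex. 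After at most $M(\Gamma)-1$ such steps we arrive at the one-vertex graph on the edges $E_1,\dots,E_N$, which is exactly $\Gamma_{\mathrm{fl}}$ (every edge becomes a loop at the unique vertex).

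The main step is then a direct induction. By part~(1) of Theorem~\ref{th:join}, each gluing step does not decrease the spectral gap, so along the chain
\[
\Gamma = \Gamma^{(0)} \rightsquigarrow \Gamma^{(1)} \rightsquigarrow \cdots \rightsquigarrow \Gamma^{(k)} = \Gamma_{\mathrm{fl}}, \qquad k = M(\Gamma)-1,
\]
we obtain $\lambda_1(\Gamma) = \lambda_1(\Gamma^{(0)}) \le \lambda_1(\Gamma^{(1)}) \le \cdots \le \lambda_1(\Gamma^{(k)}) = \lambda_1(\Gamma_{\mathrm{fl}})$. Since $\Gamma$ was an arbitrary connected metric graph on the given set of edges, this shows $\lambda_1(\Gamma_{\mathrm{fl}})$ is maximal, as claimed. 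One should note that each intermediate $\Gamma^{(j)}$ stays connected (gluing vertices can never disconnect a connected graph), so Theorem~\ref{th:join} applies at every step.

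I expect the only genuine subtlety—more a bookkeeping point than an obstacle—to be checking that Theorem~\ref{th:join} really covers the case where the two vertices being glued are already joined by one or more edges, so that gluing them turns those edges into loops; the proof of Theorem~\ref{th:join} only used that passing to $\Gamma'$ adds the single form-domain constraint $u(V_1)=u(V_2)$, which is equally valid in that situation, so no difficulty actually arises. A secondary remark worth including: the same argument with the roles reversed shows that for any connected $\Gamma$ on the fixed edge set, $\lambda_1(\Gamma)$ is \emph{smallest} when no gluings are performed, but of course the requirement that $\Gamma$ be connected forces some identifications, so there is no single universal minimizer—only the maximizer $\Gamma_{\mathrm{fl}}$ is canonical.
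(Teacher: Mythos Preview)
Your argument is correct and is precisely the one the paper intends: the corollary is stated without proof, as an immediate consequence of Theorem~\ref{th:join}, and iterated vertex-gluing down to a single vertex is the obvious (and only natural) way to extract it. Your remarks on connectivity being preserved and on the case of pre-existing edges between $V_1$ and $V_2$ are accurate and require no further justification.
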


\section{Adding an edge}

Our goal in this section is to study behavior of the spectral gap as an extra edge is added to the metric graph.
We start by proving a direct analog of Proposition \ref{prop2}.

\begin{theo} \label{Th2}
Let $\Gamma$ be a connected metric graph and let $\Gamma'$ be another graph obtained from $\Gamma$ by adding one vertex and one edge
 connecting the new vertex with the vertex $ V_1$. 
 \begin{enumerate}
 \item The first eigenvalues satisfy the following inequality:
            \[\lambda_1(\Gamma)\geq \lambda_1(\Gamma').\]
\item The equality $ \lambda_1 (\Gamma) = \lambda_1 (\Gamma') $ holds if and only if
every eigenfunction $ \psi_1^\Gamma $ corresponding to $ \lambda_1 (\Gamma) $ is equal to zero at $ V_1 $
$$ \psi_1^\Gamma (V_1) = 0. $$
\end{enumerate}
\end{theo}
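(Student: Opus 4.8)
The plan is to mimic the discrete argument of Proposition~\ref{prop2}, but carried out at the level of the quadratic form on the metric graph. For the inequality $\lambda_1(\Gamma')\le\lambda_1(\Gamma)$, I would take an eigenfunction $\psi_1^\Gamma$ for $\lambda_1(\Gamma)$ and extend it to $\Gamma'$ by declaring it \emph{constant} on the new pendant edge $E_{\rm new}$, equal to the boundary value $\psi_1^\Gamma(V_1)$. Call this extension $\varphi$; it lies in $\stackrel{c}{W^1_2}(\Gamma')$ (it is continuous at $V_1$ by construction and has an $L_2$ derivative, namely zero on $E_{\rm new}$). Since $\varphi$ need not be orthogonal to the constant function $\mathbf 1$ on $\Gamma'$, I subtract a constant $c$ and set $\gamma=\varphi+c\cdot\mathbf 1$, with $c$ chosen so that $\int_{\Gamma'}\gamma=0$; explicitly $c=-\frac{1}{\mathcal L(\Gamma')}\,\ell_{\rm new}\,\psi_1^\Gamma(V_1)$, where $\ell_{\rm new}$ is the length of the new edge (using $\int_\Gamma\psi_1^\Gamma=0$). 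Adding a constant does not change the derivative, so the Rayleigh quotient of $\gamma$ on $\Gamma'$ has numerator $\|(\psi_1^\Gamma)'\|_{L_2(\Gamma)}^2=\lambda_1(\Gamma)\,\|\psi_1^\Gamma\|_{L_2(\Gamma)}^2$, while its denominator is $\|\gamma\|_{L_2(\Gamma')}^2\ge\|\psi_1^\Gamma\|_{L_2(\Gamma')}^2=\|\psi_1^\Gamma\|_{L_2(\Gamma)}^2+\ell_{\rm new}\,\psi_1^\Gamma(V_1)^2\ge\|\psi_1^\Gamma\|_{L_2(\Gamma)}^2$; the first of these two inequalities is the statement that shifting by the mean only decreases the $L_2$ norm. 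Hence $\lambda_1(\Gamma')\le$ Rayleigh quotient of $\gamma\le\lambda_1(\Gamma)$.

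For the equality case, suppose first that $\psi_1^\Gamma(V_1)=0$ for some eigenfunction. Then $c=0$, $\gamma=\varphi=\psi_1^\Gamma$ extended by $0$ on $E_{\rm new}$, the two Rayleigh quotients agree, so $\lambda_1(\Gamma')\le\lambda_1(\Gamma)$ is an equality; in fact $\varphi$ is then a genuine eigenfunction of $L^{\rm st}(\Gamma')$ — it is constant ($\equiv 0$) on the pendant edge, and one checks the vertex conditions at $V_1$ still hold because the extra normal derivative contributed by $E_{\rm new}$ is zero. Conversely, if $\lambda_1(\Gamma)=\lambda_1(\Gamma')$, then $\gamma$ built from \emph{any} choice of $\psi_1^\Gamma$ must itself be a minimizer of the Rayleigh quotient on $\Gamma'$, hence an eigenfunction of $L^{\rm st}(\Gamma')$ at $\lambda_1(\Gamma')$; but from the chain of inequalities above, equality forces both $c=0$ and $\ell_{\rm new}\psi_1^\Gamma(V_1)^2=0$, i.e. $\psi_1^\Gamma(V_1)=0$. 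Since $\psi_1^\Gamma$ was arbitrary among eigenfunctions for $\lambda_1(\Gamma)$, every such eigenfunction vanishes at $V_1$. (One should also remark that if $\lambda_1(\Gamma')$ has an eigenfunction not coming from this construction, the argument still applies since we only need one inequality saturated to extract the conclusion about $\psi_1^\Gamma$.)

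The one point that needs slightly more care than in the discrete case — and which I expect to be the main obstacle — is verifying the ``conversely'' direction rigorously: from $\lambda_1(\Gamma)=\lambda_1(\Gamma')$ one must deduce the vanishing condition for \emph{every} eigenfunction $\psi_1^\Gamma$, not just one. This works because the map $\psi_1^\Gamma\mapsto\gamma$ is defined for each eigenfunction separately, and each resulting $\gamma$ realizes the common minimal value only if the corresponding $\psi_1^\Gamma(V_1)=0$; if some eigenfunction had $\psi_1^\Gamma(V_1)\neq0$ the Rayleigh quotient of its $\gamma$ would be strictly below $\lambda_1(\Gamma)=\lambda_1(\Gamma')$, contradicting the variational characterization of $\lambda_1(\Gamma')$. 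A secondary technical point is to confirm that the extension-by-a-constant genuinely lands in the form domain $\stackrel{c}{W^1_2}(\Gamma')$ and that, in the equality case, the minimizer is a classical eigenfunction satisfying the standard matching conditions at the enlarged vertex $V_1$ (continuity is automatic; the normal-derivative balance picks up a zero term from the flat pendant edge). Both are routine but worth stating, since — unlike for discrete graphs — the subtlety here is entirely on the side of the form domains rather than any operator inequality.
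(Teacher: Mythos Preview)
Your overall strategy is exactly that of the paper: extend $\psi_1^\Gamma$ by the constant $\psi_1^\Gamma(V_1)$ on the pendant edge, shift by a constant $c$ to make the result orthogonal to $1$, and plug into the Rayleigh quotient for $\Gamma'$. The equality discussion is also handled the same way.

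There is, however, a genuine error in your denominator estimate. You write
\[
\|\gamma\|_{L_2(\Gamma')}^2 \;\geq\; \|\varphi\|_{L_2(\Gamma')}^2 \;=\; \|\psi_1^\Gamma\|_{L_2(\Gamma)}^2 + \ell_{\rm new}\,\psi_1^\Gamma(V_1)^2 \;\geq\; \|\psi_1^\Gamma\|_{L_2(\Gamma)}^2,
\]
and justify the first inequality by ``shifting by the mean only decreases the $L_2$ norm.'' But that sentence says precisely the opposite: since $\gamma$ is $\varphi$ with its mean removed, one has $\|\gamma\|_{L_2(\Gamma')}^2 = \|\varphi\|_{L_2(\Gamma')}^2 - c^2\mathcal L(\Gamma') \leq \|\varphi\|_{L_2(\Gamma')}^2$, so your first link points the wrong way and the chain collapses.

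The inequality you actually need, $\|\gamma\|_{L_2(\Gamma')}^2 \geq \|\psi_1^\Gamma\|_{L_2(\Gamma)}^2$, is nevertheless true and is obtained in the paper by a direct expansion that avoids the intermediate comparison with $\|\varphi\|_{L_2(\Gamma')}$: using $\int_\Gamma \psi_1^\Gamma = 0$,
\[
\|\gamma\|_{L_2(\Gamma')}^2 \;=\; \|\psi_1^\Gamma + c\|_{L_2(\Gamma)}^2 + \bigl(\psi_1^\Gamma(V_1)+c\bigr)^2\ell_{\rm new}
\;=\; \|\psi_1^\Gamma\|_{L_2(\Gamma)}^2 + c^2\mathcal L(\Gamma) + \bigl(\psi_1^\Gamma(V_1)+c\bigr)^2\ell_{\rm new},
\]
which is manifestly $\geq \|\psi_1^\Gamma\|_{L_2(\Gamma)}^2$, with equality iff $c=0$ and $\psi_1^\Gamma(V_1)+c=0$, i.e.\ $\psi_1^\Gamma(V_1)=0$. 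With this correction in place, the remainder of your argument (including the ``every eigenfunction'' part of the equality case) goes through and coincides with the paper's proof.
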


\begin{proof}

    Let us define the following function on $\Gamma'$:
        \[f(x):=\left\{
            \begin{array}{ll}
                \psi_1(x), & x \in \Gamma,\\
                \psi_1(V_1)& x \in \Gamma'\backslash \Gamma.
            \end{array}\right.\]
    This function is in general not orthogonal to the zero energy eigenfunction $1 \in L_2(\Gamma')$. Therefore consider the nonzero function $g$ differed from $ f $
     by a constant
        \[g(x):=f(x)+c,\]
    where $c$ is chosen so that the orthogonality condition in $L_2(\Gamma')$ holds
        \[0 = \langle g(x), 1\rangle_{L_2(\Gamma')} = \underbrace{\langle \psi_1,1 \rangle_{L_2(\Gamma)}}_{=0}+\psi_1(V_1) \ell + c\mathcal L',\]
    where $ \ell $ and  $\mathcal L'$ are the length of the added edge and the total length of $\Gamma'$ respectively.
    This implies
        \[c = -\frac{\psi_1(V_1) \ell}{\mathcal L'}.\]
    Using this vector the following estimate for the first eigenvalue may be obtained:
        \begin{eqnarray*}
            \lambda_1(\Gamma')\leq \frac{\langle L^{st}g,g\rangle_{L_2(\Gamma')}}{\|g\|^2_{L_2(\Gamma')}} = \frac{\langle L^{st} \psi_1, \psi_1\rangle_{L_2(\Gamma)}}{\|\psi_1\|_{L_2(\Gamma)}^2+c^2 \mathcal L +|\psi_1(V_1)+c|^2 \ell}\leq \lambda_1(\Gamma).
        \end{eqnarray*}
        Here $ \mathcal L $ denotes the total length of the metric graph $ \Gamma. $
The last inequality follows from the fact that
        \[\langle L^{st} \psi_1, \psi_1\rangle_{L_2(\Gamma)} = \lambda_1(\Gamma) \|\psi_1\|^2,\]
    and
        \[\|\psi_1^\Gamma\|_{L_2(\Gamma)}^2+c^2 \mathcal L+|\psi_1(V_1)+c|^2 \ell \geq \|\psi_1\|_{L_2(\Gamma)}^2.\]

    Note that in the last expression the equality holds if and only if $c=0$ and $|\psi_1(V_1) + c|^2=0$ which implies $\psi_1(V_1)=0.$ This proves the second assertion.

\end{proof}

In the proof of the last theorem we did not really use that  $ \Gamma' \setminus \Gamma $ is an edge. It is straightforward
to generalize the theorem for the case where $  \Gamma' \setminus \Gamma $ is an arbitrary finite connected graph
joined to $ \Gamma $ at one vertex only.

We return now to our original goal and investigate behavior of the spectral gap when an edge between two
vertices is added to a metric graph.

\begin{theo}\label{th:add}
    Let $\Gamma$ be a connected metric graph and $L^{st}$ -- the corresponding free Laplace
    operator. Let $\Gamma'$ be a metric graph obtained from $\Gamma$ by adding an edge
     between the vertices $V_1$ and $V_2$.
 Assume that the eigenfunction $\psi_1$ corresponding to the first excited eigenvalue can be chosen such that
                \begin{equation}\label{extra}
                    \psi_1(V_1) = \psi_1(V_2).
                \end{equation}
            Then the following inequality for the second eigenvalues hold:
                \[\lambda_1(\Gamma) \geq \lambda_1(\Gamma').\]
\end{theo}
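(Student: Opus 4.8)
The plan is to bound $\lambda_1(\Gamma')$ from above by a Rayleigh quotient built from the eigenfunction $\psi_1$ of $\Gamma$, in the same spirit as the proof of Theorem~\ref{Th2}. Write $e$ for the added edge, $\ell$ for its length, $\mathcal L$ for the total length of $\Gamma$ and $\mathcal L'=\mathcal L+\ell$ for that of $\Gamma'$, and set $a:=\psi_1(V_1)=\psi_1(V_2)$, which makes sense by the hypothesis~\eqref{extra}. First I would define $f$ on $\Gamma'$ to coincide with $\psi_1$ on $\Gamma$ and to be the constant $a$ on $e$. This is exactly where the hypothesis enters: the constant extension matches the boundary values of $\psi_1$ at both endpoints of $e$, so $f$ is continuous at the two vertices of $e$, hence $f\in\stackrel{c}{W^1_2}(\Gamma')$ lies in the form domain of $L^{\rm st}(\Gamma')$. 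Since $f'=0$ on $e$, the Dirichlet energy is unchanged, $\|f'\|^2_{L_2(\Gamma')}=\|\psi_1'\|^2_{L_2(\Gamma)}=\lambda_1(\Gamma)\,\|\psi_1\|^2_{L_2(\Gamma)}$.

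Next I would correct $f$ for orthogonality to the ground state $1\in L_2(\Gamma')$. Because $\psi_1\perp 1$ on $\Gamma$ one has $\langle f,1\rangle_{L_2(\Gamma')}=a\ell$, so I replace $f$ by $g:=f+c$ with $c=-a\ell/\mathcal L'$; then $g\perp 1$ in $L_2(\Gamma')$ and $g\neq 0$ (otherwise $\psi_1$ would be constant on $\Gamma$). A constant shift does not affect the derivative, so $\|g'\|^2_{L_2(\Gamma')}=\|\psi_1'\|^2_{L_2(\Gamma)}$, while a short computation gives
\[
\|g\|^2_{L_2(\Gamma')}=\|f\|^2_{L_2(\Gamma')}+2c\langle f,1\rangle_{L_2(\Gamma')}+c^2\mathcal L'=\|\psi_1\|^2_{L_2(\Gamma)}+a^2\ell-\frac{a^2\ell^2}{\mathcal L'}=\|\psi_1\|^2_{L_2(\Gamma)}+\frac{a^2\ell\,\mathcal L}{\mathcal L'}.
\]

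Finally, inserting $g$ into the min--max characterisation of $\lambda_1(\Gamma')$,
\[
\lambda_1(\Gamma')\le\frac{\|g'\|^2_{L_2(\Gamma')}}{\|g\|^2_{L_2(\Gamma')}}=\frac{\lambda_1(\Gamma)\,\|\psi_1\|^2_{L_2(\Gamma)}}{\|\psi_1\|^2_{L_2(\Gamma)}+a^2\ell\,\mathcal L/\mathcal L'}\le\lambda_1(\Gamma),
\]
the last inequality holding because the extra term in the denominator is nonnegative; this is the asserted inequality. As a by-product, equality forces $a=0$, i.e.\ $\psi_1(V_1)=\psi_1(V_2)=0$, which parallels the equality cases of the preceding statements.

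I do not expect a serious technical obstacle here; the entire argument rests on the single observation that~\eqref{extra} permits extending $\psi_1$ along $e$ by a constant without creating any new Dirichlet energy. The genuinely delicate point is conceptual rather than computational: this monotonicity is truly one-sided, because if~\eqref{extra} fails then every admissible extension of $\psi_1$ to $e$ must vary and therefore contributes strictly positive energy, so the comparison can go the other way. That is precisely why the general situation, where~\eqref{extra} is not assumed, needs the separate treatment given elsewhere in the paper.
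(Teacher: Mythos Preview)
Your argument is correct and follows essentially the same route as the paper: extend $\psi_1$ by the constant $a=\psi_1(V_1)=\psi_1(V_2)$ on the new edge, subtract the appropriate constant $c=-a\ell/\mathcal L'$ to restore orthogonality to $1$, and plug into the Rayleigh quotient. The only cosmetic difference is that you simplify the denominator all the way to $\|\psi_1\|^2+a^2\ell\mathcal L/\mathcal L'$, whereas the paper stops at the equivalent expression $\|\psi_1\|^2+c^2\mathcal L+|\psi_1(V_1)+c|^2\ell$ and observes it dominates $\|\psi_1\|^2$; your added remark on the equality case is a bonus not stated in the paper's version of this theorem.
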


\begin{proof}
To prove the inequality let us consider the eigenfunction $\psi_1(\Gamma)$ for $L^{st}(\Gamma).$ We introduce a new function
on $ \Gamma' $
    \[f(x)=\left\{ \begin{array}{ll}
                            \psi_1(x), & x \in \Gamma,\\
                            \psi_1(V_1) \; \; (=\psi_1(V_2)) & x \in \Gamma'\backslash \Gamma.
                    \end{array}\right.\]
This function is not orthogonal to the constant function. Let us adjust the constant $c$ so that the nonzero function
$g(x) = f(x) + c$ is orthogonal to $ 1$ in 
$L_2(\Gamma')$:\footnote{In what follows we are going to use the same notation $ 1 $ for the functions identically equal to one on both
metric graphs $ \Gamma $ and $ \Gamma'$.}
    \[0 = \langle g(x),1  \rangle_{L_2(\Gamma')} = \underbrace{\langle \psi_1(x),1\rangle_{L_2(\Gamma)}}_{=0} + \psi_1(V_1)\ell + c \mathcal L'=0,\]
where $ \ell $ is the length of the added edge and $\mathcal L'$ is the total length of the graph $\Gamma'$, as before. 
We have used that the eigenfunction $ \psi_1 $ has mean value zero, {\it i.e.} is orthogonal to the ground state. This implies
    \[c= -\frac{\psi_1(V_1) \ell}{\mathcal L'}.\]
Now we are ready to get an estimate for $\lambda_1(\Gamma')$ using Rayleigh quotient
    \[\lambda_1(\Gamma') \leq \frac{\langle L^{st}(\Gamma')g,g\rangle_{L_2(\Gamma')}}{\|g\|^2_{L_2(\Gamma')}}.\]
The numerator and denominator can be evaluated as follows
   \begin{align*} \langle L^{st}(\Gamma')g,g\rangle_{L_2(\Gamma')} &= \langle L^{st}(\Gamma)\psi_1,\psi_1 \rangle_{L_2(\Gamma)} = \lambda_1(\Gamma)\|\psi_1\|^2_{L_2(\Gamma)},\\[3mm]
         \|g\|^2_{L_2(\Gamma')} &= \|\psi_1 +c \|^2_{L_2(\Gamma)} + | \psi_1(V_1) + c|^2 \ell = \\
         & = \|\psi_1\|^2_{L_2(\Gamma)} + c^2 \mathcal L + |\psi_1(V_1) + c|^2 \ell \geq \\
         & \geq \|\psi_1\|^2_{L_2(\Gamma)}
    \end{align*}
It follows, that
    \[\lambda_1(\Gamma)\geq \lambda_1(\Gamma').\]
\end{proof}

Let us illustrate the above theorem by couple of examples:

\begin{example} \label{Ex1}
Let $ \Gamma $ be the graph formed by one edge of length $ a $. The spectrum of $L^{\rm st} (\Gamma)$ is
\[\sigma(L^{\rm st} (\Gamma)) = \left\{\left(\frac{\pi}{a}\right)^2 n^2\right\}_{n=0}^\infty.\]
All eigenvalues have multiplicity one.

Consider the graph $\Gamma'$ obtained from $ \Gamma $ by adding an edge of length $ b, $ so
that $ \Gamma' $ is  formed by two intervals of lengths $a$ and $b$ connected in parallel.
The graph $ \Gamma'$  is equivalent to the circle of length $a+b$. The spectrum  is:
    \[\sigma(L^{\rm st}(\Gamma')) = \left\{\left(\frac{2\pi}{a+b}\right)^2 n^2\right\}_{n=0}^\infty,\]
where all the eigenvalues except for the ground state have double multiplicity.

Let us study the relation between the first eigenvalues:
$$ \lambda_1 (\Gamma) = \frac{\pi^2}{a^2}, \; \, \lambda_1 (\Gamma') = \frac{4 \pi^2}{(a+b)^2}. $$
Any relation between these values is possible:
        \begin{align*}
            b>a & \: \Rightarrow \: \lambda_1(\Gamma) >  \lambda_1(\Gamma') ,\\
            b<a & \: \Rightarrow \: \lambda_1(\Gamma) <  \lambda_1(\Gamma') .
        \end{align*}
Therefore the first eigenvalue is not in general a monotone decreasing  function of the set of edges. The spectral gap decreases
 only if certain additional conditions are satisfied.
\end{example}

\begin{example} \label{Ex2}
    Consider, in addition to graph $ \Gamma' $ discussed in Example \ref{Ex1} ,
    the graph $\Gamma''$ obtained from $ \Gamma'$ by adding another one edge of length $ c�$
    between the same two vertices.
    Hence $ \Gamma'' $ is formed by three parallel edges of lengths $a,b$ and $c$.
     The first eigenfunction for $L^{\rm st} (\Gamma')$ can always be chosen so that its values at the vertices are equal.
      Then, in accordance with Theorem \ref{th:add}, the first eigenvalue for $ \Gamma'' $ is less or equal to the first eigenvalue for $ \Gamma' $:
        \[\lambda_1(\Gamma'')\leq \lambda_1(\Gamma').\]
        This fact can easily be supported by explicit calculations.
\end{example}

\begin{figure}

 \hspace{15mm}
 \includegraphics[width=0.2\textwidth]{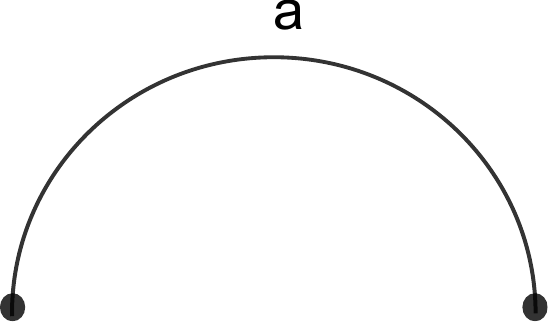} \vspace{-14mm}
 
 \hspace{50mm}
  \includegraphics[width=0.2\textwidth]{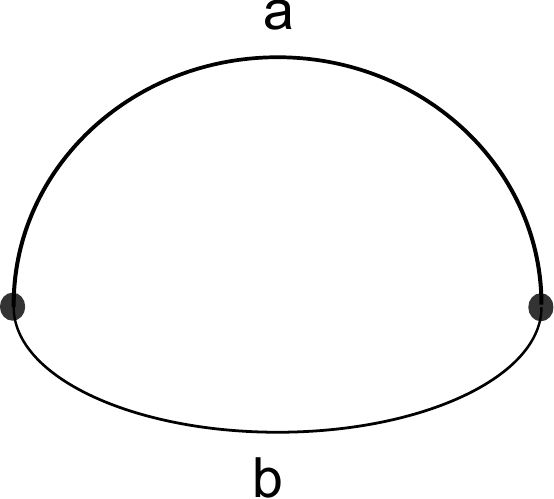}
   \hspace{5mm}
   \includegraphics[width=0.2\textwidth]{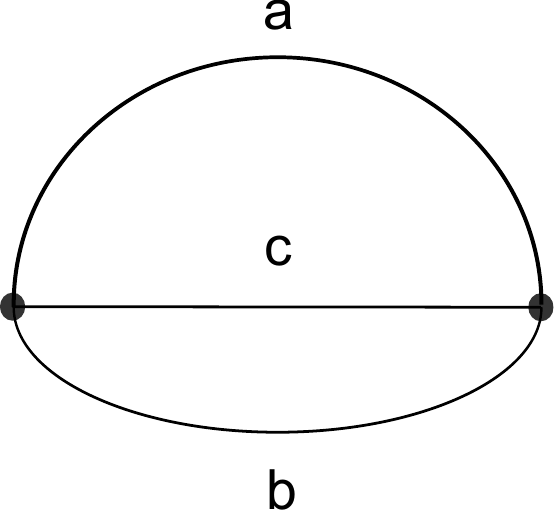}
\caption{Graphs $ \Gamma $, $ \Gamma'$, and $ \Gamma''.$}

 \label{Fig}

\end{figure}

Considered examples and proved theorems show that the spectral gap has a tendency to decrease, when a new 
sufficiently long edge is
added. It is not surprising, since addition of an edge increases  the total length of the graph, but the eigenvalues satisfy
Weyl's law and therefore are asymptotically close to $ (\pi n)^2/\mathcal L^2. $
This is in contrast to discrete graphs, for which addition of an edge does not lead to the increase of the number
of vertices.

Condition (\ref{extra}) in Theorem \ref{th:add} is not easy to check for non-trivial graphs and therefore it might be
interesting to obtain another explicit sufficient conditions.
In what follows we would like to discuss one such geometric condition ensuring that the spectral gap
drops as  a new edge is added to a graph. The main idea is to compare the length $ \ell $ of the new edge
with the total length of the original graph $ \mathcal L (\Gamma).$
It appears that if $ \ell > \mathcal L(\Gamma) $,   then
the spectral gap always decreases. We have already observed this phenomenon discussing Example \ref{Ex1}, where
behavior of $ \lambda_1 $ depended on the ratio between the lengths $ a $ and $ b. $ If $ b \equiv \ell > a \equiv \mathcal L (\Gamma) $, then the gap decreases.
It is surprising that the same explicit condition holds for arbitrary connected graphs $ \Gamma. $

\begin{theo} \label{th:ell}
Let $ \Gamma $ be a connected finite compact metric graph of length $ \mathcal L(\Gamma) $
and let $ \Gamma' $ be a graph constructed from $ \Gamma $ by
adding an edge of length $ \ell $ between certain two vertices. If
\begin{equation} \label{ass2}
 \ell > \mathcal L(\Gamma) ,
 \end{equation} then
the eigenvalues of the corresponding free Laplacians satisfy the estimate
\begin{equation} \label{ineq3}
      \lambda_1 (\Gamma) \geq \lambda_1 (\Gamma').
\end{equation}
\end{theo}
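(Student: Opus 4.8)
The plan is to use the Rayleigh quotient for $\Gamma'$ with a carefully chosen trial function built from the first eigenfunction $\psi_1$ of $\Gamma$, but—unlike Theorem~\ref{th:add}—without assuming $\psi_1(V_1)=\psi_1(V_2)$. On the added edge $E=[0,\ell]$ we cannot simply take a constant, since the two endpoint values $\alpha:=\psi_1(V_1)$ and $\beta:=\psi_1(V_2)$ may differ and continuity at the vertices must be respected. First I would put on $E$ the function that is affine in the arclength parameter, interpolating linearly between $\alpha$ and $\beta$; on $\Gamma$ itself I would keep $\psi_1$. Call this function $f$. Then I would shift by a constant $c$, $g=f+c$, with $c$ chosen to make $g\perp 1$ in $L_2(\Gamma')$, exactly as in the proofs of Theorems~\ref{Th2} and~\ref{th:add}.

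The next step is to compute the Rayleigh quotient $\langle (L^{st}g)',g\rangle/\|g\|^2$, equivalently $\|g'\|^2/\|g\|^2$. The numerator splits as $\|\psi_1'\|^2_{L_2(\Gamma)}$ (the contribution from $\Gamma$, unaffected by the constant shift) plus the contribution of the affine piece on $E$, which is $\ell\cdot\big((\beta-\alpha)/\ell\big)^2=(\beta-\alpha)^2/\ell$. So the numerator equals $\lambda_1(\Gamma)\|\psi_1\|^2_{L_2(\Gamma)}+(\beta-\alpha)^2/\ell$. The denominator equals $\|\psi_1\|^2_{L_2(\Gamma)}+c^2\mathcal L+\big(\text{contribution of the affine-plus-}c\text{ piece on }E\big)$, and the latter is at least zero; more carefully one keeps the integral of $(\,\text{affine}+c\,)^2$ over $E$, which is nonnegative. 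Thus
\[
\lambda_1(\Gamma')\le\frac{\lambda_1(\Gamma)\|\psi_1\|^2_{L_2(\Gamma)}+(\beta-\alpha)^2/\ell}{\|\psi_1\|^2_{L_2(\Gamma)}+c^2\mathcal L+\int_E(\text{affine}+c)^2}.
\]
To conclude $\lambda_1(\Gamma')\le\lambda_1(\Gamma)$ it suffices that the extra term $(\beta-\alpha)^2/\ell$ in the numerator is dominated by $\lambda_1(\Gamma)$ times the extra mass in the denominator, i.e.\ by $\lambda_1(\Gamma)\big(c^2\mathcal L+\int_E(\text{affine}+c)^2\big)$.

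The main obstacle is precisely this last comparison: I need a lower bound on $\lambda_1(\Gamma)$ (from below it is bounded by the spectral gap of a single interval of the same total length, by the Friedlander/Kurasov--Naboko result quoted in the introduction, so $\lambda_1(\Gamma)\ge(\pi/\mathcal L(\Gamma))^2$) together with an upper bound on $(\beta-\alpha)^2$ in terms of $\|\psi_1'\|^2$ and hence of $\lambda_1(\Gamma)\|\psi_1\|^2$. Here is where the hypothesis $\ell>\mathcal L(\Gamma)$ should enter: since $V_1$ and $V_2$ are joined by a path in $\Gamma$ of length at most $\mathcal L(\Gamma)<\ell$, the difference $\beta-\alpha$ is the integral of $\psi_1'$ along that path, so by Cauchy--Schwarz $(\beta-\alpha)^2\le\mathcal L(\Gamma)\,\|\psi_1'\|^2_{L_2(\Gamma)}=\mathcal L(\Gamma)\,\lambda_1(\Gamma)\|\psi_1\|^2_{L_2(\Gamma)}$. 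Therefore $(\beta-\alpha)^2/\ell<\lambda_1(\Gamma)\|\psi_1\|^2_{L_2(\Gamma)}\cdot\big(\mathcal L(\Gamma)/\ell\big)$, an amount strictly less than $\lambda_1(\Gamma)$ times a fixed fraction of $\|\psi_1\|^2$. The remaining work is to check that this deficit is absorbed by the denominator's surplus $c^2\mathcal L+\int_E(\text{affine}+c)^2$ after accounting for the value of $c$; I expect this to reduce, after elementary algebra, to the clean inequality $\ell>\mathcal L(\Gamma)$, possibly after slightly optimizing the shape of the trial function on $E$ (affine is the natural minimizer of the Dirichlet energy with prescribed endpoint values, so it should be optimal). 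If the affine choice does not quite close the gap, the fallback is to choose on $E$ a one-parameter family (affine plus an adjustable ``bump'') and optimize, but I anticipate the linear interpolant suffices.
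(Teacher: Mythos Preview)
Your overall architecture matches the paper's: extend $\psi_1$ to the new edge by an interpolant, shift by a constant to restore orthogonality to $1$, and compare the Rayleigh quotient to $\lambda_1(\Gamma)$ using the lower bound $\lambda_1(\Gamma)\ge(\pi/\mathcal L)^2$. The gap is in the choice of interpolant and in the accounting that follows.

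With the affine interpolant on $E=[-\ell/2,\ell/2]$, write $\sigma=(\alpha+\beta)/2$ and $\delta=\beta-\alpha$. Then $c=-\sigma\ell/\mathcal L'$, and a short computation gives
\[
\lambda_1(\Gamma')\ \le\ \frac{\lambda_1(\Gamma)\|\psi_1\|^2+\delta^2/\ell}{\|\psi_1\|^2+\sigma^2\ell\bigl(1-\ell/\mathcal L'\bigr)+\delta^2\ell/12}.
\]
Dropping the nonnegative $\sigma^2$ term, the quotient is $\le\lambda_1(\Gamma)$ iff $\delta^2/\ell\le\lambda_1(\Gamma)\,\delta^2\ell/12$, i.e.\ $\lambda_1(\Gamma)\ge 12/\ell^2$. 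Combined with $\lambda_1(\Gamma)\ge\pi^2/\mathcal L^2$ this yields only $\ell\ge(2\sqrt3/\pi)\,\mathcal L\approx 1.103\,\mathcal L$, strictly weaker than the hypothesis $\ell>\mathcal L$. Your heuristic ``affine minimizes Dirichlet energy with prescribed endpoints, hence is optimal'' is the wrong optimization: you need to minimize the \emph{ratio} of the added Dirichlet energy to the added $L_2$ mass on $E$, not the energy alone. In the worst case $\sigma=0$ this ratio is $12/\ell^2$ for the affine function but $\pi^2/\ell^2$ for $\gamma_2\sin(\pi x/\ell)$, and $\pi^2/\ell^2$ is exactly the threshold that converts $\lambda_1(\Gamma)\ge\pi^2/\mathcal L^2$ into $\ell\ge\mathcal L$. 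The paper accordingly takes on $E$ the trial function $\gamma_1+\gamma_2\sin(\pi x/\ell)$ with $\gamma_1=\sigma$, $\gamma_2=\delta/2$; then the extra numerator is $\gamma_2^2\pi^2/(2\ell)$ and the extra denominator is $\gamma_2^2\ell/2$, and the inequality closes precisely under $\ell>\mathcal L$.

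Your Cauchy--Schwarz step along a path in $\Gamma$, giving $\delta^2\le\mathcal L\,\lambda_1(\Gamma)\|\psi_1\|^2$, is correct but does not help: after the $\sigma^2$ term is dropped the needed inequality is homogeneous in $\delta$, so an upper bound on $\delta^2$ in terms of $\|\psi_1\|^2$ cannot rescue the comparison. The paper does not use this path estimate at all; the only external input is $\lambda_1(\Gamma)\ge(\pi/\mathcal L)^2$. In short, your fallback (``optimize the shape on $E$'') is not optional --- it is the crux --- and the optimizer is the half-sine, not the line.
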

\begin{proof}
Let $ \psi_1$ be any eigenfunction corresponding to the first excited eigenvalue $ \lambda_1 (\Gamma) $
 of $L^{\text{st}}(\Gamma)$.  It follows that the minimum of the Rayleigh quotient is attained at
 $ \psi_1 $:
        \[\lambda_1(\Gamma) =\min_{u \in \stackrel{c}{W^1_2}(\Gamma): u \perp 1} \frac{\|u'\|^2_{L^2(\Gamma)}}{\|u\|^2_{L_2(\Gamma)}} = \frac{\|\psi_1'\|^2_{L_2(\Gamma)}}{\|\psi_1\|^2_{L_2(\Gamma)}},\]
        where $ \stackrel{c}{W^1_2}(\Gamma) $ denotes the set of continuous on graph $ \Gamma $ $ W_2^1$-functions.
 Let us denote by $ V_1 $ and $ V_2 $ the vertices in $ \Gamma$, where the new edge $ E $ of length $ \ell $ is attached.

The eigenvalue $\lambda_1(\Gamma')$ can again be estimated using Rayleigh quotient
 \begin{equation}\label{rayleigh}
        \lambda_1(\Gamma') =\min_{u \in \stackrel{c}{W^1_2}(\Gamma'): u \perp 1}
        \frac{\|u'\|^2_{L^2(\Gamma')}}{\|u\|^2_{L_2(\Gamma')}} \leq \frac{\|g'\|^2_{L_2(\Gamma)}}{\|g\|^2_{L_2(\Gamma)}},
\end{equation}
where $ g(x) $ is any function in  $ \stackrel{c}{W^1_2}(\Gamma') $ orthogonal to constant function in $ L_2 (\Gamma').$
Let us choose the trial function $ g $ of the form $ g(x) = f(x) + c $ where
 \begin{equation}
    f(x):=\left\{
    \begin{array}{ll}
    \psi_1(x), & x\in \Gamma,\\
    \gamma_1+\gamma_2\sin{\left(\frac{\pi x}{\ell}\right)} & x\in \Gamma'\backslash \Gamma = E = [-\ell/2, \ell/2],
    \end{array}\right.
\end{equation}
with $\gamma_1 = (\psi_1 (V_1) + \psi_1(V_2))/2$ and $\gamma_2 = (\psi_1(V_2)- \psi_1 (V_1))/2$. 
Here we assumed that left end point of the interval is connected to $ V_1 $ and the right end point - to $ V_2. $
The function $ f$ obviously
belongs to $ \stackrel{c}{W^1_2}(\Gamma') $, since it is continuous at $ V_1 $ and $V_2$ , but it is not necessarily orthogonal to the ground state eigenfunction
 $1$. The constant $c$ is adjusted in order to ensure the orthogonality
    \[\langle g,1 \rangle_{L_2(\Gamma')} = 0\]
    holds. The constant $ c $ can easily be calculated
    $$ 0 = \langle g, 1 \rangle_{L_2(\Gamma')} = c \mathcal L' +  \underbrace{\langle \psi_1, 1 \rangle_{L_2(\Gamma)}}_{=0}  + \int_{\ell/2}^{\ell/2}
    \left( \gamma_1 + \gamma_2 \sin \left(\frac{\pi x}{\ell}\right) \right) dx = c \mathcal L' + \gamma_1 \ell $$
\begin{equation} \label{eqc}
    \Rightarrow c = - \frac{\gamma_1 \ell}{\mathcal L'}.
\end{equation}

The function $ g $ can be used as a trial function in \eqref{rayleigh} to estimate the spectral gap.
Let us begin by computing the denominator using the fact that $ g $ is orthogonal to $ 1 $
    \begin{align}\label{denom}
    \| g \|^2_{L_2(\Gamma')} &= \|f  + c\|_{L_2(\Gamma')}^2 = \| f \|^2_{L_2(\Gamma')} -   \| c  \|^2_{L_2 (\Gamma')}  \nonumber \\
    &=  \|\psi_1\|_{L(\Gamma)}^2 + \int_{-\ell/2}^{\ell/2}\left(\gamma_1+\gamma_2\sin{\left(\frac{\pi x}{\ell}\right)}\right)^2\: dx - c^2 \mathcal L'  \nonumber \\
    &= \|\psi_1\|_{L(\Gamma)}^2 + \ell \gamma_1^2 + \frac{\ell}{2}\gamma_2^2- c^2 \mathcal L'
    \end{align}
The numerator yields
    \begin{align}\label{nom}
        \|g'\|^2_{L_2(\Gamma')} &= \|f'\|^2_{L_2(\Gamma')} = \|\psi_1'\|^2_{L_2(\Gamma)} + \int_{-\ell/2}^{\ell/2} \left(\gamma_2^2 \frac{\pi^2}{\ell^2}\cos^2\left(\frac{\pi x}{\ell}\right)\right)\: dx \nonumber\\
        &= \lambda_1(\Gamma)\|\psi_1\|^2_{L_2(\Gamma)} + \gamma_2^2 \frac{\pi^2}{2\ell}
    \end{align}
    After plugging \eqref{denom} and \eqref{nom} into \eqref{rayleigh} we obtain
    \[
    \lambda_1(\Gamma')\leq \frac{\lambda_1(\Gamma)\|\psi_1\|^2_{L_2(\Gamma)} + \gamma_2^2 \frac{\pi^2}{2\ell}}{\|\psi_1\|_{L(\Gamma)}^2 + \ell \gamma_1^2 + \frac{\ell}{2}\gamma_2^2- c^2 \mathcal L'}.
    \]
 Using \eqref{eqc}  the last estimate can be written as
 \begin{equation} \label{hjk}
    \lambda_1(\Gamma')\leq \frac{\lambda_1(\Gamma)\|\psi_1\|^2_{L_2(\Gamma)} + \gamma_2^2 \frac{\pi^2}{2\ell}}{\|\psi_1\|_{L(\Gamma)}^2 + \ell \gamma_1^2 \left(
    1- \frac{\ell}{\mathcal L'} \right) + \frac{\ell}{2}\gamma_2^2} \leq
    \frac{\lambda_1(\Gamma)\|\psi_1\|^2_{L_2(\Gamma)} + \gamma_2^2 \frac{\pi^2}{2\ell}}{\|\psi_1\|_{L(\Gamma)}^2 + \frac{\ell}{2}\gamma_2^2} ,
 \end{equation}
 where we used that $ \ell < \mathcal L' = \mathcal L + \ell. $
 It remains to take into account the following estimate for $ \lambda_1 $ proven in \cite{Fr2005,KuNa}
 \begin{equation} \label{estKuNa}
 \lambda_1 (\Gamma) \geq \left( \frac{\pi}{\mathcal L} \right)^2.
 \end{equation}
 Then taking into account \eqref{ass2} estimate \eqref{hjk} can be written as
 \begin{equation} \label{hjk2}
    \lambda_1(\Gamma')  \leq
    \frac{\lambda_1(\Gamma)\|\psi_1\|^2_{L_2(\Gamma)} + \lambda_1 (\Gamma) \gamma_2^2 \ell /2 }{\|\psi_1\|_{L(\Gamma)}^2 + \gamma_2^2 \ell/2 }  = \lambda_1 (\Gamma).
 \end{equation}
 The theorem is proven.
\end{proof}

Estimate \eqref{estKuNa} was crucial for our proof. It relates the spectral gap and the total length of the metric graph, {\it i.e.}
geometric and spectral properties of quantum graphs. It might be interesting to prove an analog of the last theorem for discrete
graphs. Proposition \ref{prop1} states that the spectral gap increases if one edge is added to a discrete graph. Adding a long
edge should correspond to adding a chain of edges to a discrete graph.

The previous theorem gives us a sufficient geometric condition for the spectral gap to decrease. Let us study now the case where
the spectral gap is increasing. Similarly, as we proved that adding one edge that is \emph{long enough} always makes the spectral
gap smaller (Theorem \ref{th:ell}), we claim that an edge that is \emph{short enough} makes it grow. We have already seen in Theorem \ref{th:join}
that adding an edge of zero length (joining two vertices into one) may lead to an increase of the spectral gap. It appears that criterium
for gap to decrease can be formulated explicitly in terms of the eigenfunction on the larger graph. Therefore let us change our point of view and
study behavior of the spectral gap as an edge is deleted.

\section{Decreasing connectivity - cutting edges} 

In the following section we are going to study spectral gap's behaviour when one of the edges is deleted. The result of
such procedure is not obvious, since cutting of an edge decreases  the total length of the metric graph and
one expects that the first excited eigenvalue increases. On the other hand cutting an edge decreases graph's
connectivity and therefore the spectral gap is expected to decrease. It is easy to construct examples when one
of these two tendencies prevails: Example \ref{Ex1} shows that the spectral gap may both decrease and increase
when an edge is deleted.

Let us discuss first what happens when one of the edges is cut in a certain internal point. Let $ \Gamma^* $ be a
connected
metric graph obtained from a metric graph $ \Gamma $ by cutting one of the edges, say $ E_1 = [x_1, x_2] $ 
at a point $ x^* \in (x_1, x_2). $ It will be convenient to denote by $ x_1^* $ and $ x_2^* $  the points on the two sides of the cut.         
 In other words, the graph $ \Gamma^* $ has precisely the same set of
edges and vertices as $ \Gamma $ except that the edge $ [x_1, x_2 ] $ is substituted by
two edges $ [x_1, x_1^*] $ and $ [ x_2^*, x_2 ] $ and two new vertices $ V_1^* = \{ x_1^*\} $ and $ V_2^* =  \{ x_2^* \}$ are added to the set of vertices.

The spectral gap for the graphs $ \Gamma $ and $ \Gamma^* $ can be calculated by minimising the same Rayleigh
quotient over the set of $ W_2^1$-functions with zero average. The only difference is that the
functions used to calculate $ \lambda_1 (\Gamma) $ are necessarily continuous at $ x^* $
$$ u (x_1^* ) = u (x_2^*) $$
(as functions from $ W_2^1 [x_1, x_2]$).  The functions used in calculating $ \lambda_1 (\Gamma^*) $
do not necessarily attain the same values at the points $ x_1^* $ and $ x_2^*. $ It follows that
$ \lambda_1 (\Gamma^*) \leq \lambda_1 (\Gamma), $ since the set of admissible functions is larger
for $ \Gamma^*. $ If the minimising function for $ \Gamma^* $ has the same values at $ x_1^* $ and
$ x_2^* $, then it is also an eigenfunction for $ L^{\rm st} (\Gamma) $ and therefore $ \lambda_1 (\Gamma^*)
= \lambda_1 (\Gamma). $ Moreover, if the spectral gap for the graphs is the same, then every function
minimising the quotient for $ \Gamma $ minimises the quotient for $ \Gamma^* $ as well and therefore
satisfies Neumann condition at $ x^*. $ It follows that every eigenfunction for $ L^{\rm st}(\Gamma) $
corresponding to $ \lambda_1 $ is also an eigenfunction for $ L^{\rm st}(\Gamma^*). $
The following theorem is proven.

\begin{theo} \label{Th:cut}
Let $ \Gamma $ be a connected metric graph and let $ \Gamma^* $ be another graph obtained
from $ \Gamma $ by cutting one of the edges at an internal point $ x^* $ producing two new
vertices $ V_1^* $ and $ V_2^*. $
\begin{enumerate}
\item The first excited eigenvalues satisfy the following inequality
\begin{equation}
\lambda_1 (\Gamma^*) \leq \lambda_1 (\Gamma).
\end{equation}
\item If $ \lambda_1 (\Gamma^*) = \lambda_1 (\Gamma) $ then
every eigenfunction of $ L^{\rm st}(\Gamma) $ corresponding to $ \lambda_1 (\Gamma) $ satisfies Neumann condition at the cut point $ x^* $: $ \psi'_1 (x^*) = 0. $   
If at least one of the eigenfunctions 
on $ \Gamma^* $ satisfies $ \psi_1^* (V_1^*) = \psi_1^* (V_2^*), $
then $ \lambda_1 (\Gamma^*) = \lambda_1 (\Gamma). $
\end{enumerate}
\end{theo}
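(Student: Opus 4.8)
The plan is to derive everything from the variational characterization of $\lambda_1$ as the minimum of the Rayleigh quotient $\|u'\|^2_{L_2}/\|u\|^2_{L_2}$ over the quadratic-form domain intersected with the orthogonal complement of the constant function $1$, in the same spirit as the proofs of Theorems \ref{th:join} and \ref{Th2}. The first step is to record the relation between the two form domains: $\stackrel{c}{W^1_2}(\Gamma)$ is exactly the subspace of $\stackrel{c}{W^1_2}(\Gamma^*)$ cut out by the single linear condition $u(x_1^*) = u(x_2^*)$ --- indeed, the new vertices $V_1^*$, $V_2^*$ have degree one, so continuity there is automatic, and a function that is $W^1_2$ on $[x_1,x_1^*]$ and on $[x_2^*,x_2]$ and matches at $x^*$ is precisely a function that is $W^1_2$ on the reassembled edge. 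Since the numerator, the denominator and the constraint $u\perp 1$ depend only on $u$ and on the (unchanged) underlying measure, minimizing over the strictly larger admissible set for $\Gamma^*$ can only decrease the infimum, which gives part (1).

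For the forward half of part (2) I would argue as follows. Assume $\lambda_1(\Gamma^*) = \lambda_1(\Gamma)$ and let $\psi_1$ be an arbitrary eigenfunction of $L^{\rm st}(\Gamma)$ for $\lambda_1(\Gamma)$. Because $\lambda_1(\Gamma)$ is the bottom of the spectrum of $L^{\rm st}(\Gamma)$ restricted to $\{1\}^\perp$, the entire $\lambda_1(\Gamma)$-eigenspace coincides with the set of minimizers of the Rayleigh quotient over $\stackrel{c}{W^1_2}(\Gamma)\cap\{1\}^\perp$; in particular $\psi_1$ is such a minimizer. By the form-domain inclusion above $\psi_1$ is also an admissible trial function for $\Gamma^*$, and it attains the value $\lambda_1(\Gamma) = \lambda_1(\Gamma^*)$, which is the infimum over the larger set; hence $\psi_1$ minimizes the Rayleigh quotient for $\Gamma^*$ as well, so it is an eigenfunction of $L^{\rm st}(\Gamma^*)$. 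The standard matching conditions at the degree-one vertices $V_1^*$ and $V_2^*$ then say that the normal derivative of $\psi_1$ vanishes on each side of the cut; since an eigenfunction is $W^2_2$ on the (uncut) edge its first derivative is continuous there, so the two one-sided Neumann conditions combine into the single interior condition $\psi_1'(x^*)=0$.

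For the converse half of part (2) I would simply use $\psi_1^*$ as a trial function on $\Gamma$. If some $\lambda_1(\Gamma^*)$-eigenfunction $\psi_1^*$ satisfies $\psi_1^*(V_1^*)=\psi_1^*(V_2^*)$, then it is continuous across $x^*$, hence $W^1_2$ on the reassembled edge and continuous at every vertex of $\Gamma$, and it still has mean value zero because $\Gamma$ and $\Gamma^*$ carry the same measure; thus it is admissible for the Rayleigh quotient on $\Gamma$, where it gives the value $\lambda_1(\Gamma^*)$. Therefore $\lambda_1(\Gamma)\le\lambda_1(\Gamma^*)$, and combined with part (1) this forces equality.

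The proof is mostly bookkeeping with form domains, so I do not expect a serious obstacle; the points that need genuine care are (i) justifying that the \emph{whole} $\lambda_1(\Gamma)$-eigenspace --- not just one eigenfunction --- is the minimizer set, which is what makes the phrase ``every eigenfunction'' in the statement legitimate, and (ii) the passage from the pair of one-sided Neumann conditions at $V_1^*$, $V_2^*$ to $\psi_1'(x^*)=0$, which relies on the $W^2_2$-regularity of eigenfunctions on edges. It is also worth recording explicitly that $\Gamma^*$ is assumed connected, so that $\lambda_1>0$ and the eigenfunctions involved are genuinely non-constant.
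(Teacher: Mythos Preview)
Your proposal is correct and follows essentially the same route as the paper: both arguments rest on the inclusion $\stackrel{c}{W^1_2}(\Gamma)\subset\stackrel{c}{W^1_2}(\Gamma^*)$ (the form domain of $\Gamma$ is cut out of that of $\Gamma^*$ by the single matching condition $u(x_1^*)=u(x_2^*)$), derive part (1) from monotonicity of the minimum under enlargement of the admissible set, and handle part (2) by passing minimizers back and forth between the two variational problems. Your write-up is in fact more explicit than the paper's on the two technical points you flag, namely that the full $\lambda_1(\Gamma)$-eigenspace coincides with the minimizer set and that the two one-sided Neumann conditions merge into $\psi_1'(x^*)=0$ via $W_2^2$-regularity on the uncut edge.
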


This theorem is a certain reformulation of Theorem \ref{th:join} and implies that the spectral
gap has a tendency to decrease when an edge is cut in an internal point. Note that the total length of the graph is preserved this time.

\section{Deleting an edge}

Let us study now what happens if an edge is deleted, or if a whole interval is cut away from an edge
(without gluing the remaining intervals together). 
Let $ \Gamma $ be a connected metric graph as before and let $ \Gamma^* $ be a graph obtained from
$ \Gamma $ by deleting one of the edges. 

The following theorem proves a sufficient condition that guarantees that the spectral gap is decreasing as 
one of the edges is deleted.

\begin{theo}
    Let $\Gamma$ be a connected finite compact metric graph of the total length $\mathcal L$ and let  $\Gamma^*$ be another connected metric
    graph obtained from $\Gamma$ by deleting one edge of length $\ell$ between certain vertices $V_1$ and $V_2$.
    Assume in addition that
    \begin{equation} \label{assest}
    \left(
     \max_{\psi_1: L^{\rm st}  (\Gamma) \psi_1 = \lambda_1 \psi_1 } \frac{(\psi_1 (V_1) - \psi_1 (V_2))^2}{(\psi_1 (V_1) + \psi_1 (V_2))^2} \cot^2 \frac{k_1 \ell}{2} - 1 \right) \frac{k_1}{2} \cot \frac{k_1 \ell}{2} \geq (\mathcal L - \ell)^{-1},
    \end{equation}
    where $ \lambda_1 (\Gamma) = k_1^2, \; k_1 > 0 $ is the first excited eigenvalue of $ L^{\rm st} (\Gamma) $,
    then
    \begin{equation} \lambda_1 (\Gamma^*) \leq \lambda_1 (\Gamma). \end{equation}
\end{theo}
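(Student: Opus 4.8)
\emph{Proof strategy.} The natural move is to reverse the construction of Theorem~\ref{th:add}: here $\Gamma$ is the \emph{larger} graph, $\Gamma=\Gamma^*\cup E$ with $E$ the deleted edge of length $\ell$, so I would take an eigenfunction $\psi_1$ of $L^{\rm st}(\Gamma)$ with $L^{\rm st}(\Gamma)\psi_1=k_1^2\psi_1$ --- specifically the one realising the maximum in \eqref{assest} --- restrict it to $\Gamma^*$, and use a shift of that restriction as a trial function in the Rayleigh quotient for $\lambda_1(\Gamma^*)$. Since $\psi_1$ is continuous on $\Gamma$, its restriction lies in $\stackrel{c}{W^1_2}(\Gamma^*)$; it fails to be orthogonal to $1$ in $L_2(\Gamma^*)$ only by the amount $\langle\psi_1,1\rangle_{L_2(\Gamma^*)}=-\int_E\psi_1$ (because $\psi_1\perp 1$ on all of $\Gamma$), so I set $g:=\psi_1|_{\Gamma^*}+c$ with $c=\frac{1}{\mathcal L-\ell}\int_E\psi_1$, which makes $g\perp 1$ in $L_2(\Gamma^*)$.

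The next step is to evaluate the quotient. The numerator is $\|g'\|^2_{L_2(\Gamma^*)}=\|\psi_1'\|^2_{L_2(\Gamma)}-\|\psi_1'\|^2_{L_2(E)}=k_1^2\|\psi_1\|^2_{L_2(\Gamma)}-\|\psi_1'\|^2_{L_2(E)}$, and the same orthogonality bookkeeping as in the proofs of Theorems~\ref{Th2} and \ref{th:ell} gives $\|g\|^2_{L_2(\Gamma^*)}=\|\psi_1\|^2_{L_2(\Gamma^*)}-c^2(\mathcal L-\ell)=\|\psi_1\|^2_{L_2(\Gamma)}-\|\psi_1\|^2_{L_2(E)}-c^2(\mathcal L-\ell)$. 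Provided this denominator is strictly positive --- the degenerate case $g\equiv 0$, i.e.\ $\psi_1$ constant on $\Gamma^*$, being excluded or dispatched separately --- clearing it turns the desired bound $\lambda_1(\Gamma^*)\le\|g'\|^2_{L_2(\Gamma^*)}/\|g\|^2_{L_2(\Gamma^*)}\le k_1^2$ into the \emph{edge-local} inequality
\[
k_1^2\,\|\psi_1\|^2_{L_2(E)}+k_1^2\,c^2(\mathcal L-\ell)\;\le\;\|\psi_1'\|^2_{L_2(E)} .
\]

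Finally I would make the three $E$-integrals explicit. Parametrising $E=[-\ell/2,\ell/2]$ with its endpoints attached to $V_1$ and $V_2$, the eigenvalue equation forces $\psi_1|_E=a\cos k_1x+b\sin k_1x$ with $a=\frac{\psi_1(V_1)+\psi_1(V_2)}{2\cos(k_1\ell/2)}$ and $b=\frac{\psi_1(V_2)-\psi_1(V_1)}{2\sin(k_1\ell/2)}$ (the exceptional configurations $\sin(k_1\ell/2)=0$ or $\cos(k_1\ell/2)=0$ needing a separate, trivial treatment; note that $\cot(k_1\ell/2)$ in \eqref{assest} already presupposes $\sin(k_1\ell/2)\neq0$). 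Elementary integration yields $\int_E\psi_1=\frac{2a}{k_1}\sin(k_1\ell/2)$, hence $k_1^2c^2(\mathcal L-\ell)=\frac{4a^2\sin^2(k_1\ell/2)}{\mathcal L-\ell}$, while a short computation gives $\|\psi_1'\|^2_{L_2(E)}-k_1^2\|\psi_1\|^2_{L_2(E)}=k_1\sin(k_1\ell)\,(b^2-a^2)$. Substituting these, dividing by the manifestly positive quantities $2a^2$ and $2\sin^2(k_1\ell/2)$, using $\sin(k_1\ell)=2\sin(k_1\ell/2)\cos(k_1\ell/2)$ and $b^2/a^2=\frac{(\psi_1(V_1)-\psi_1(V_2))^2}{(\psi_1(V_1)+\psi_1(V_2))^2}\cot^2(k_1\ell/2)$, the edge-local inequality collapses to exactly \eqref{assest} evaluated at the chosen maximiser $\psi_1$. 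The main work, and the only place where anything subtle happens, is this last reduction: one must arrange the trigonometric bookkeeping so that every division is by a positive quantity (so that the chain of reductions is an equivalence, not a one-way implication), and one must handle the handful of degenerate configurations ($g\equiv 0$, $a=0$, $\sin(k_1\ell/2)=0$ or $\cos(k_1\ell/2)=0$) where the boundary data no longer determine $\psi_1|_E$; everything else is routine.
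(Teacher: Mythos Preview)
Your proposal is correct and follows essentially the same route as the paper's proof: restrict an eigenfunction $\psi_1$ of $L^{\rm st}(\Gamma)$ to $\Gamma^*$, shift by a constant to restore orthogonality to $1$, insert into the Rayleigh quotient for $\lambda_1(\Gamma^*)$, then parametrise $\psi_1|_E$ by $\alpha\sin k_1x+\beta\cos k_1x$ on $E=[-\ell/2,\ell/2]$ and reduce the resulting inequality to \eqref{assest} via the same trigonometric identities. The only cosmetic differences are your renaming $(\beta,\alpha)\mapsto(a,b)$ and your choice to rearrange the target inequality into the edge-local form $\|\psi_1'\|^2_{L_2(E)}-k_1^2\|\psi_1\|^2_{L_2(E)}\ge k_1^2c^2(\mathcal L-\ell)$ before substituting, whereas the paper substitutes first and then compares numerator with denominator; you are also somewhat more explicit than the paper about the degenerate configurations.
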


\begin{proof}
It will be convenient to denote the edge to be deleted by $ E = \Gamma \setminus \Gamma^*$ as well as to introduce notation
$ \mathcal L^* = \mathcal L-\ell $ for the total length of  $ \Gamma^*. $

    Let us consider any real eigenfunction $\psi_1$ on $ \Gamma $ corresponding to the eigenvalue $\lambda_1(\Gamma)$. We then define the function $g \in \stackrel{c}{W^1_2}(\Gamma^*)$ by
        \[
            g = \psi_1|_{\Gamma^*} + c,
        \]
    where the constant $c$ is to be adjusted so that $g$  has mean value zero on $ \Gamma^* $:
        \begin{equation}
        \label{orthog}
            \langle g,1\rangle_{L_2(\Gamma^*)} = 0.
        \end{equation}
 Straightforward calculations lead to
 $$
 0  =  \langle \psi_1, 1 \rangle_{L_2(\Gamma^*)} + c \mathcal L^*  =  - \langle \psi_1, 1 \rangle_{L_2(E)} + c \mathcal L^*
$$
 \begin{equation} \label{23}
 \Rightarrow c = \frac{\int_E \psi_1 (x) dx}{\mathcal L^* }.
 \end{equation}

    The function $ g $ can then be used to estimate the
     second eigenvalue $\lambda_1(\Gamma^*)$:
        \begin{equation}
        \label{est}
            \lambda_1(\Gamma^*) = \min_{u \in \stackrel{c}{W^1_2}(\Gamma^*): u\perp 1} \frac{\|u'\|^2_{L_2(\Gamma^*)}}{\|u\|^2_{L_2(\Gamma^*)}} \leq \frac{\|g'\|^2_{L_2(\Gamma^*)}}{\|g\|^2_{L_2(\Gamma^*)}}.
        \end{equation}
     Bearing in mind that $\langle \psi_1,1 \rangle_{L_2(\Gamma)} = 0$ and using \eqref{23} we evaluate the denominator in \eqref{est} first:
        \begin{align}
        \label{denom2}
            \|g\|^2_{L_2(\Gamma^*)} &= \|\psi_1 + c \|^2_{L_2(\Gamma^*)} = \int_{\Gamma}{(\psi_1 + c)^2}\: dx - \int_{E}{(\psi_1 + c)^2} \: dx = \nonumber \\
            & = \|\psi_1\|^2_{L_2(\Gamma)} - \int_E {\psi_1}^2 \: dx - \frac{1}{\mathcal L^*} \left(\int_E \psi_1 \: dx \right)^2.
        \end{align}
  The numerator similarly yields
        \begin{equation}
        \label{nom2}
            \|g'\|^2_{L_2(\Gamma^*)} = \int_{\Gamma}{(\psi_1')^2} \: dx - \int_{E} (\psi_1')^2 \: dx = \lambda_1(\Gamma)\|\psi_1\|^2_{L_2(\Gamma)} - \int_{E} (\psi_1')^2 \: dx.
        \end{equation}
    Plugging \eqref{denom2} and \eqref{nom2} into \eqref{est} we arrive at
        \begin{equation}
        \label{lambda1}
        \lambda_1(\Gamma^*) \leq \frac{\lambda_1(\Gamma)\|\psi_1\|^2_{L_2(\Gamma)} - \int_{E} (\psi_1')^2 \: dx}{\|\psi_1\|^2_{L_2(\Gamma)} - \int_E {\psi_1}^2 \: dx - \frac{1}{\mathcal L^*} \left(\int_E \psi_1 \: dx \right)^2}.
        \end{equation}

  Let us evaluate the integrals appearing in \eqref{lambda1} taking into account that $ \psi_1 $ is a solution to Helmholtz equation on the edge $ E $ which can be parameterized as $ E = [-\ell/2, \ell/2] $
  so that $ x = - \ell/2 $ belongs to $ V_1$ and $ x = \ell/2 $ to $ V_2 $
        \begin{equation} \label{efunc}
            \psi_1|_E(x) = \alpha \sin{(k_1 x)} + \beta \cos{(k_1 x)},
        \end{equation}
   where
        \begin{equation} \label{bval}  \alpha = - \frac{\psi_1(V_1)-\psi_1(V_2)}{2 \sin(k_1 \ell /2)}, \quad \beta = \frac{\psi_1(V_1)+\psi_1(V_2)}{2 \cos{(k_1 \ell/2)}}.\end{equation}
Direct calculations imply
$$
\begin{array}{lcl}
\displaystyle \int_E \psi_1(x) dx & = & \displaystyle \frac{2 \beta}{k_1} \sin \left(\frac{k_1 \ell}{2} \right); \\[2mm]
\displaystyle \int_E (\psi_1(x))^2 dx & = & \displaystyle \frac{\alpha^2 + \beta^2}{2} \ell - \frac{\alpha^2 - \beta^2}{2} \frac{\sin (k_1 \ell)}{k_1} ; \\[2mm]
\displaystyle \int_E (\psi_1' (x))^2 dx & = &  \displaystyle k_1^2 \left( \frac{\alpha^2 + \beta^2}{2} \ell + \frac{\alpha^2-\beta^2}{2} \frac{\sin ( k_1 \ell)}{k_1} \right).
\end{array} $$
        Inserting calculated values into \eqref{lambda1} we get
        \begin{equation}
        \lambda_1 (\Gamma^*) \leq \lambda_1 (\Gamma)
        \frac{\displaystyle \parallel \psi_1 \parallel^2_{L_2(\Gamma)} - \frac{\alpha^2+\beta^2}{2} \ell - \frac{\alpha^2 - \beta^2}{2} \frac{\sin (k_1 \ell)}{k_1}}
        {\displaystyle \parallel \psi_1 \parallel^2_{L_2(\Gamma)} - \frac{\alpha^2+\beta^2}{2} \ell + \frac{\alpha^2 - \beta^2}{2} \frac{\sin (k_1 \ell)}{k_1}
        - \frac{1}{\mathcal L^*} \frac{4 \beta^2}{\lambda_1 (\Gamma)} \sin^2 \left(\frac{k_1 \ell}{2} \right)}  .
        \end{equation}
     To guarantee that the quotient is not greater than $ 1 $ and therefore $ \lambda_1 (\Gamma^*) \leq \lambda_1 (\Gamma) $ it is enough that
     $$  \frac{\alpha^2 - \beta^2}{2} \frac{\sin (k_1 \ell)}{k_1} \geq - \frac{\alpha^2 - \beta^2}{2} \frac{\sin (k_1 \ell)}{k_1}
        + \frac{1}{\mathcal L^*} \frac{4 \beta^2}{\lambda_1 (\Gamma) } \sin^2 \left(\frac{k_1 \ell}{2} \right)$$
        \begin{equation} \label{star}
         \Longleftrightarrow \; \;
        \frac{k_1}{2} \left( \frac{\alpha^2}{\beta^2} -1 \right) \cot \left(\frac{k_1 \ell}{2}\right) \geq (\mathcal L^*)^{-1}.  
        \end{equation}
    Using \eqref{bval} the last inequality can be written as
    $$
         \left(
   \frac{(\psi_1 (V_1) - \psi_1 (V_2))^2}{(\psi_1 (V_1) + \psi_1 (V_2))^2} \cot^2 \left(\frac{k_1 \ell}{2}\right) - 1 \right) \frac{k_1}{2} \cot \left(\frac{k_1 \ell}{2}\right) \geq (\mathcal L^*)^{-1}.
     $$
     Remembering that the eigenfunction $ \psi_1 $ could be chosen arbitrary we arrive at \eqref{assest}. \end{proof}

Let us apply the above theorem to obtain an estimate for the length of the piece that can be cut 
from an edge so that the spectral gap still decreases. It appears that such estimate can be given in terms
of an eigenfunction $ \psi_1 $ corresponding to the first excited eigenvalue. 
Consider any edge in $ \Gamma $, say $ E_1 = [x_1, x_2] $ and choose an arbitrary
internal point $ x^* \in (x_1, x_2). $
Assume that we cut away an
interval of length $ \ell $ centred at $ x^*. $ Of course the
length $ \ell $ should satisfy the obvious geometric condition:  $ x_1 \leq x^*- \ell/2  $ and $ x^* + \ell/2 \leq x_2. $
We assume in addition that  
\begin{equation} \label{innn}
    \ell < \frac{\pi}{2 k_1}
    \end{equation}
    guaranteeing  in particular that the cotangent function in \eqref{assest} is positive.

The function $ \psi_1 $ on the edge $ E_1 $ can be written in a form
similar to \eqref{efunc}
\begin{equation}
\psi_1 (x) = \alpha \sin k_1 (x-x^*) + \beta \cos k_1 (x-x^*).
\end{equation}
Then formula \eqref{star} implies that the spectral gap decreases as the interval $ [x^*-\ell/2, x^* + \ell/2] $ is cut
away from the graph if 
\begin{equation} \label{adcond}
 \vert \alpha \vert > \vert \beta \vert. 
 \end{equation}
and the following estimate is satisfied
\begin{equation}
\cot \left( \frac{k_1 \ell}{2} \right) \geq \frac{2}{k_1 \mathcal L^* \left( \frac{\alpha^2}{\beta ^2} -1 \right)}.
\end{equation}
Using the fact that under condition \eqref{innn} we have $ \cot \left( \frac{k_1 \ell}{2} \right) \geq \frac{\pi}{2 k_1 \ell}$
the following explicit estimate on $ \ell $ can be obtained
\begin{equation} \label{estl}
\ell \leq \frac{\pi}{4} (\mathcal L - \ell) \left( \frac{\alpha^2}{\beta ^2} -1 \right) ,
\end{equation}
of course under condition \eqref{adcond}.
For the spectral gap not to increase it is enough that estimate \eqref{estl} is satisfied for at least one eigenfunction $ \psi_1 $:
\begin{equation}
\ell \leq \min \left\{ \frac{\pi}{2 k_1}, \; \;  \frac{\pi}{4} (\mathcal L-\ell) \max_{\psi_1: L^{\rm st}(\Gamma) \psi_1 = \lambda_1 \psi_1} \left( \frac{\alpha^2}{\beta^2} -1 \right) \right\},
\end{equation}
where we have taken into account \eqref{innn}.

We see that if the eigenfunction $ \psi_1 $ is sufficiently asymmetric with respect to the point $ x^* $
({\it i.e.} \eqref{adcond} is satisfied), then
a certain sufficiently small interval can be cut from the edge ensuring that the spectral gap decreases, despite
that the total length is also decreasing. Additional condition \eqref{adcond} was expected, since
if $ \psi_1 $ is symmetric with respect to $ x^* $, then the spectral gap may increase for any $ \ell. $
Really, one may imagine that deleting of the interval is performed in two stages. One cuts the edge $ E_1 $
at the point $ x^* $ first. Then one deletes the intervals $ [x^*-\ell/2, x^*_1] $ and $ [x^*_2, x^* + \ell/2] $.
If $ \alpha = 0 $ (symmetric function), then the spectral gap may be preserved in accordance to Theorem \ref{Th:cut}.
Deleting the pending edges (intervals $ [x^*-\ell/2, x^*_1] $ and $ [x^*_2, x^* + \ell/2] $)
may lead only to an increase of the spectral gap due to Theorem \ref{th:add}.

We have shown that deleting not so long edges or cutting away short intervals from edges may lead to
a decrease of the spectral gap despite the total length of the graph increases.
This effect reminds us of the phenomena
   discovered in \cite{ExJe}, where behavior of the spectral gap under extension of edges was discussed. It appeared that the ground state
   may decrease with the increase of the edge lengths, provided graphs are of complicated topology.


\begin{thebibliography}{99}

\bibitem{BeCaFuKu}
 G.\,Berkolaiko, R.\,Carlson, S.A.\,Fulling, \and P.\,Kuchment (editors), {\em Quantum graphs and their applications. Proceedings of the AMS-IMS-SIAM Joint Summer Research Conference held in Snowbird, UT, June 19--23, 2005},  {\it Contemporary Mathematics}, {\bf 415}. American Mathematical Society, 2006.


\bibitem{Cattaneo}  C.\,Cattaneo, {\em The spectrum of the continuous Laplacian on a graph},
{\it Monatsh. Math.} {\bf 124} (1997), no. 3, 215Ð235.

\bibitem{Chung}  F.\,Chung,  Spectral graph theory,
 CBMS Regional Conference Series in Mathematics, 92,  American Mathematical Society, Providence, RI, 1997.

\bibitem{CdV}
 Y.\,Colin de Verdi\`ere, {\em Spectres de graphes}, Soci\'et\'e
Math\'ematiques de France, 1998.

\bibitem{CvDoSa}  D.\,Cvetkovi\'c, M.\,Doob, H.\,Sachs, 
 {\em Spectra of graphs. Theory and application}, Academic Press, New York-London, 1980.

\bibitem{CvRoSi} D.\,Cvetkovi\'c, P.\,Rowlinson, S.\,Simi\'c, 
{\em An introduction to the theory of graph spectra}, Cambridge University Press, Cambridge, 2010.


\bibitem{ExJe}  P.\,Exner, M.\,Jex,  {\em On the ground state of quantum graphs with attractive $ \delta$-coupling},
{\it  Phys. Lett.} A {\bf 376} (2012), no. 5, 713-717.


\bibitem{ExSe}
 P.\,Exner \and P.\,\v{S}eba,
{\em Free quantum motion on a branching
graph}, {\it Rep. Math. Phys.}, {\bf 28} (1989), 7--26.








\bibitem{Fie} M.\,Fiedler, {\em Algebraic Connectivity of Graphs}, {\it Czechoslovak Mathematical Journal}, {\bf 23 (98)}, 1973, Praha, 298-305.

\bibitem{Fr2005}
L.\,Friedlander, 
{\em Extremal properties of eigenvalues for a metric graph},
{\it Ann. Inst. Fourier} {\bf 55} (2005), no. 1, 199Ð211.



\bibitem{FuKuWi}  S.A.\,Fulling, P.\,Kuchment, J.H.\,Wilson, 
{\em Index theorems for quantum graphs}, {\it J. Phys. A} {\bf 40} (2007), no. 47, 14165Ð14180.

   \bibitem{GePa}
 N.I.\,Gerasimenko \and B.S.\,Pavlov,
{\em Scattering problems on noncompact graphs},  {\it Teoret. Mat.
Fiz.}, {\bf 74} (1988), 345--359; translation in {\it Theoret. and
Math. Phys.}, {\bf
  74} (1988),  230--240.



\bibitem{KoSch1}
 V.\,Kostrykin \and R.\,Schrader,
{\em Kirchoff's rule for quantum
wires},  {\it J. Phys.} A, {\bf 32} (1999), 595--630.


\bibitem{Ku04}
 P.\,Kuchment,
{\em Quantum graphs. I. Some basic structures. Special section on quantum graphs}, {\it Waves Random Media}, {\bf 14} (2004), no. 1, S107--S128.

\bibitem{Ku05}
 P.\,Kuchment,
{\em Quantum graphs. II. Some spectral properties of
quantum and combinatorial graphs}, {\it J. Phys. A } {\bf 38}
(2005), no. 22, 4887--4900.



\bibitem{Ku06}
 P.\,Kurasov, {\em Graph Laplacians and topology},
{\it Arkiv f\"or Matematik}, {\bf 46} (2008), 95--111.

\bibitem{Ku08}
 P.\,Kurasov,  {\em Schr\"odinger operators on graphs and geometry. I. Essentially bounded potentials}, {\it J. Funct. Anal.},
{\bf 254} (2008), no. 4, 934--953.




\bibitem{KuNa}  P.\,Kurasov and S.\,Naboko,  {\em On Rayleigh theorem for quantum graphs},  Institut Mittag-Leffler Report No. 4, 2012/2013.


\bibitem{Mo}  B.\,Mohar, {\em The Laplacian spectrum of graphs}, in:
 Graph theory, combinatorics, and applications. Vol. 2 (Kalamazoo, MI, 1988), 871Ð898, Wiley, New York, 1991.

\end{thebibliography}
\end{document}